\patchcmd{\section}{\scshape}{\bfseries}{}{}
\patchcmd{\subsection}{-.5em}{.5em}{}{}
\renewcommand{\@secnumfont}{\bfseries}
\newtheorem{theorem}{Theorem}[section]
\newtheorem{lemma}[theorem]{Lemma}
\newtheorem{definition}[theorem]{Definition}
\newtheorem{example}[theorem]{Example}
\begin{document}
\mathchardef\mhyphen="2D

\title[Formalizing Relations in Type Theory]{Formalizing Relations in Type Theory}

\author[F. Kachapova]{Farida Kachapova}
\address{Department of Mathematical Sciences
\\Auckland University of Technology
\\New Zealand}

\email{farida.kachapova@aut.ac.nz}

\date{} 

\begin{abstract}
 Type theory plays an important role in foundations of mathematics as a framework for formalizing mathematics and a base for proof assistants providing semi-automatic proof checking and construction. Derivation of each theorem in type theory results in a formal term encapsulating the whole proof process. In this paper we use a variant of type theory, namely the Calculus of Constructions with Definitions, to formalize the standard theory of binary relations. This includes basic operations on relations, criteria for special properties of relations, invariance of these properties under the basic operations, equivalence relation, well-ordering, and transfinite induction. Definitions and proofs are presented as flag-style derivations. 
\end{abstract}

\subjclass[2020]{Primary 03B30; Secondary 03B38}

\keywords{Type theory, calculus of constructions, binary relation, transfinite induction, flag-style derivation}

\maketitle

\section{Introduction}

First type theories were proposed by B. Russell \cite{Russ96} as a foundation of mathematics. Other important type theories are typed $\lambda$-calculus introduced by A. Church \cite{Chur40} and intuitionistic type theory introduced by P. Martin-L{\"o}f \cite{Mart85}. A higher-order typed $\lambda$-calculus known as Calculus of Constructions (CoC) was created by T. Coquand \cite{Coq88}. Variants of CoC make formal bases of proof assistants, which are computer tools for formalizing and developing mathematics. In particular, the well-known proof assistant Coq is based on the strong variant of CoC called the Calculus of Inductive Constructions (CIC).

Here we use the variant $\lambda D$ of CoC developed in \cite{Ned14}; $\lambda D$ is called the Calculus of Constructions with Definitions. We choose $\lambda D$ because of its following useful properties.
\begin{itemize}
\item[--] In $\lambda D$, as in other variants of CoC, proofs are expressed as formal terms and thus are incorporated in the system.
\item[--] In $\lambda D$ type checking is decidable and therefore proof checking is decidable. So the correctness of a proof can be checked by an algorithm.
\item[--] $\lambda D$ is strongly normalizing, which implies the logical consistency of this theory, even with classical logic (when no extra axioms are added) - see \cite{Baren92}.
\end{itemize}

The theory $\lambda D$ is weaker than CIC because $\lambda D$ does not have inductive types. This does not limit its capability for formalizing mathematics because in $\lambda D$ we can use axiomatic approach and higher-order logic to express the objects that CIC defines with inductive types. 

In Section 2 we briefly describe the theory $\lambda D$, derived rules of intuitionistic logic in $\lambda D$, and the classical axiom of excluded third that can be added to $\lambda D$ if necessary; we also briefly explain the flag format derivation.
In Section 3 we describe the equality in $\lambda D$ and its derived properties. 

In Section 4 we study binary relations in $\lambda D$, operations on relations, and their properties. In Section 5 we formally prove criteria of relexivity, symmetry, antisymmetry and transitivity, and study the invariance of these properties under some basic operations. In Section 6 we formally define partitions in $\lambda D$ and provide a proof of their correspondence with equivalence relations. In Section 6 we also provide an example of partial order with  a formal proof, definition of well-ordering in $\lambda D$ and a formal proof of the principle of transfinite induction.  

In our formalizations we aim to keep the language and theorems as close as possible to the ones of standard mathematics.
In definitions and proofs we use the flag-style derivation described in \cite{Ned14}. Long formal derivations are moved from the main text to Appendices for better readability. 

\section{Type Theory $\lambda D$}
In \cite{Ned14} Nederpelt and Geuvers developed a formal theory  $\lambda D$ and formalized some parts of logic and mathematics in it. Here we briefly describe main features of $\lambda D$.

\subsection{Type Theory $\lambda D$}

The language of $\lambda D$ described in \cite{Ned14} has an infinite set of variables, $V$, and an infinite set of constants, $C$; these two sets are disjoint. There are also special symbols $\square$ and $*$. 

\begin{definition}
Expressions of the language are defined recursively as follows.
\begin{enumerate}
\item Each variable is an expression.

\item Each constant is an expression.

\item Constant * is an expression.

\item Constant $\square$ is an expression.

\item (Application) If $A$ and $B$ are expressions, then $AB$ is an expression.

\item (Abstraction) If $A$,  $B$ are expressions and $x$ is a variable, then $\lambda x:A.B$ is an expression.

\item (Dependent Product) If $A$,  $B$ are expressions and $x$ is a variable, then $\Pi x:A.B$ is an expression.

\item If $A_1,A_2,\ldots,A_n$ are expressions and $c$ is a constant, then $c\left(A_1,A_2,\ldots,A_n\right)$ is an expression.
\end{enumerate}
\end{definition}

An expression $A\rightarrow B$ is introduced as a particular type of Dependent Product from (7) when $x$ is not a free variable in $B$.

\begin{definition}

\begin{enumerate}
\item A \textbf{statement} is of the form $M:N$, where $M$ and $N$ are expressions.

\item A \textbf{declaration} is of the form $x:N$, where $x$ is a variable and $N$ is an expression.

\item A \textbf{descriptive definition} is of the form:
\[\bar{x}:\bar{A}\rhd c(\bar{x}):=M:N,\]
where $\bar{x}$ is a list $x_1,x_2,\ldots,x_n$ of variables, $\bar{A}$ is a list $A_1,A_2,\ldots,A_n$ of expressions, $c$ is a constant, and $M$ and $N$ are expressions.

\item A \textbf{primitive definition} is of the form:
\[\bar{x}:\bar{A}\rhd c(\bar{x}):=\Bot:N,\]
where $\bar{x}$, $\bar{A}$, and $c$ are described the same way as in (3), and $N$ is an expression. The symbol $\Bot$  denotes the non-existing definiens. Primitive definitions are used for introducing axioms where no proof terms are needed.

\item A \textbf{definition} is a descriptive definition or a primitive definition.

\item A \textbf{judgement} is of the form:
\[\Delta;\Gamma \vdash M:N,\]
where $M$ and $N$ are expressions of the language, $\Delta$ is an \textbf{environment} (a properly constructed sequence of definitions) and $\Gamma$ is a \textbf{context} (a properly constructed sequence of declarations).
\end{enumerate}
\end{definition}

For brevity we often use implicit variables in definitions, that is we omit the previously declared variables $\bar{x}$ in $c(\bar{x})$ in (3) and (4).

The following informally explains the meaning of expressions.
\begin{enumerate}
\item If an expression $M$ appears in a derived statement of the form $M$ $:*$, then $M$ is interpreted as a \textbf{type}, which represents a set or a proposition. 

\textit{Note}: There is only one type $*$ in $\lambda D$. But informally we often use $*_p$ for propositions and $*_s$ for sets to make proofs more readable.

\item If an expression $M$ appears in a derived statement of the form $M:N$, where $N$ is a type, then $M$ is interpreted as an object at the lowest level. 

When $N$ is interpreted as a set, then $M$ is regarded as an element of this set. 

When $N$ is interpreted as a proposition, then $M$ is regarded as a proof (or a proof term) of this proposition.

\item The symbol $\square$ represents the highest level.

\item \textbf{Sort} is $*$ or $\square$. Letters $s, s_1,s_2,\ldots$ are used as variables for sorts.

\item  If an expression $M$ appears in a statement of the form $M:\square$, then $M$ is called a \textbf{kind}. 
$\lambda D$ contains the derivation rule:
\[\varnothing;\varnothing \vdash *:\square,\]
which is its (only) axiom because it has an empty environment and an empty context.
\end{enumerate}

Further details of the language and derivation rules of the theory $\lambda D$ can be found in \cite{Ned14}. Judgments are formally derived in $\lambda D$ using the derivation rules. 
\medskip

\subsection{Flag Format of Derivations}

The flag-style deduction was introduced by Ja\'{s}kowski  \cite{Jas67} and Fitch \cite{Fitch52}. A derivation in the flag format is a linear deduction. Each "flag" (a rectangular box) contains a declaration that introduces a variable or an assumption; a collection of already introduced variables and assumptions makes the current context. The scope of the variable or assumption is established by the "flag pole". In the scope we construct definitions and proof terms for proving statements{/} theorems in $\lambda D$. Each new flag extends the context and at the end of each flag pole the context is reduced by the corresponding declaration.
For brevity we can combine several declarations in one flag.

More details on the flag-style deduction can be found in \cite{Ned11} and \cite{Ned14}.
\medskip

\subsection{Logic in $\lambda D$\label{subs_logic}}

The rules of intuitionistic logic are derived in the theory $\lambda D$ as shown in \cite{Ned14}. We briefly describe it here by showing the introduction and elimination rules for logical connectives and quantifiers. 

\subsubsection{Implication}
The logical implication $A\Rightarrow B$ is identified with the arrow type $A\rightarrow B$. The rules for implication follow from the following general rules for the arrow type (we write them in the flag format):

\setlength{\derivskip}{4pt}
\begin{flagderiv}
\introduce*{}{A:s_1\;|\;B:s_2}{}
\step*{}{A\rightarrow B:s_2}{}
\assume*{}{u:A\rightarrow B\;|\;v:A}{}
\step*{}{uv:B}{}
\done
\introduce*{}{x:A}{}
\skipsteps*{\dots}{}
\step*{}{M:B}{}
\conclude*{}{\lambda x:A.M\;:\;A\rightarrow B}{}
\end{flagderiv}

Here $x$ is not a free variable in $B$.

In $\lambda D$ arrows are right associative, that is $A\rightarrow B\rightarrow C$ is a shorthand for $A\rightarrow (B\rightarrow C)$.

\subsubsection{Falsity and Negation}
Falsity $\bot$ is introduced in $\lambda D$ by:
\[\bot:=\Pi A:*_p.A\;:\;*_p.\]

From this definition we get a rule for falsity:

\newpage
\begin{flagderiv}
\introduce*{}{B:*_p}{}
\skipsteps*{\dots}{}
\step*{}{u:\bot}{}
\step*{}{u:\Pi A:*_p.A}{}
\step*{}{uB:B}{}
\end{flagderiv}

The rule states that falsity implies any proposition.

As usual, negation is defined by: $\neg A:= A\rightarrow \bot$. 

Other logical connectives and quantifiers are also defined using second order encoding. Here we only list their derived rules and names of the corresponding terms, without details of their construction. 
The exact values of the terms can be found in \cite{Ned14}. 

Some of our flag derivations contain the proof terms that will be re-used in other proofs; such proof terms are written in bold font, e.g. $\boldsymbol{\wedge}\textbf{-in}$ in the first derived rule for conjunction as follows.

\subsubsection{Conjunction}
These are derived rules for conjunction $\wedge$: 

\begin{flagderiv}
\introduce*{}{A,B:*_p}{}
\assume*{}{u:A\;|\;v:B}{}
\step*{}{\boldsymbol{\wedge}\textbf{-in}(A,B,u,v)  \;:\;A\wedge B}{}
\done
\assume*{}{w:A\wedge B}{}
\step*{}{\boldsymbol{\wedge}\textbf{-el}_1(A,B,w)  \;:\;A}{}
\step*{}{\boldsymbol{\wedge}\textbf{-el}_2(A,B,w)  \;:\;B}{}
\end{flagderiv}

\subsubsection{Disjunction}

These are derived rules for disjunction $\vee$:
\begin{flagderiv}
\introduce*{}{A,B:*_p}{}
\assume*{}{u:A}{}
\step*{}{\boldsymbol{\vee}\textbf{-in}_1(A,B,u)  \;:\;A\vee B}{}
\done
\assume*{}{u:B}{}
\step*{}{\boldsymbol{\vee}\textbf{-in}_2(A,B,u)  \;:\;A\vee B}{}
\done
\assume*{}{C:*_p}{}
\assume*{}{u:A\vee B\;|\;v:A\Rightarrow C\;|\;w:B\Rightarrow C}{}
\step*{}{\boldsymbol{\vee}\textbf{-el}(A,B,C,u,v,w)  \;:\;C}{}
\end{flagderiv}

\subsubsection{Bi-implication}

Bi-implication $\Leftrightarrow$ has the standard definition: 
\[(A\Leftrightarrow B):=(A\Rightarrow B)\wedge (B\Rightarrow A).\]

\begin{lemma}
We will often use this lemma to prove bi-implication $A\Leftrightarrow B$.

\begin{flagderiv}
\introduce*{}{A,B:*_p}{}
\assume*{}{u:A\Rightarrow B\;|\;v:B\Rightarrow A}{}
\step*{}{\textbf{bi-impl}(A,B,u,v):=\wedge\text{-in}(A\Rightarrow B,B\Rightarrow A,u,v)\;:\;A\Leftrightarrow B}{}
\end{flagderiv}
\label{lemma_bi-impl}
\end{lemma}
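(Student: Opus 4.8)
The statement is essentially a direct unfolding of definitions, so the plan is short. First I would recall that by definition $(A\Leftrightarrow B)$ \emph{is} literally the expression $(A\Rightarrow B)\wedge(B\Rightarrow A)$; hence a proof term of type $A\Leftrightarrow B$ is exactly a proof term of type $(A\Rightarrow B)\wedge(B\Rightarrow A)$, and no conversion beyond unfolding this single definition is required.

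Second, to build such a term I would invoke the derived conjunction introduction rule $\wedge\text{-in}$ listed above. Instantiated with the two propositions $P:=A\Rightarrow B$ and $Q:=B\Rightarrow A$, that rule turns $p:P$ and $q:Q$ into a term $\wedge\text{-in}(P,Q,p,q):P\wedge Q$. Before applying it I must discharge its side condition, namely that $P$ and $Q$ are themselves of type $*_p$: this holds because $A,B:*_p$ and, by the arrow-type formation rule (from $A:s_1$, $B:s_2$ one gets $A\rightarrow B:s_2$, here with $s_1=s_2=*_p$), the arrow of two propositions is again a proposition. Feeding in $u:A\Rightarrow B$ and $v:B\Rightarrow A$ then yields $\wedge\text{-in}(A\Rightarrow B,B\Rightarrow A,u,v):(A\Rightarrow B)\wedge(B\Rightarrow A)$, which is definitionally $A\Leftrightarrow B$.

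Finally I would package this as a descriptive definition, introducing the constant $\textbf{bi-impl}$ with parameters $A,B,u,v$, definiens the term just constructed, and type $A\Leftrightarrow B$; in flag format this is a single line opened under the flags $A,B:*_p$ and $u:A\Rightarrow B$, $v:B\Rightarrow A$, exactly as displayed in the statement. There is no genuine obstacle here: the only points requiring care are the well-formedness check that $A\Rightarrow B$ and $B\Rightarrow A$ are propositions (so that $\wedge\text{-in}$ is applicable) and passing the arguments to $\wedge\text{-in}$ in the correct order, so that the inferred type is $(A\Rightarrow B)\wedge(B\Rightarrow A)$ and not its mirror image. The term $\textbf{bi-impl}$ can then be reused whenever a bi-implication is to be assembled from its two directions.
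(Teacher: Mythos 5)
Your proposal is correct and matches the paper's own derivation exactly: the paper defines $\textbf{bi-impl}(A,B,u,v)$ as $\wedge\text{-in}(A\Rightarrow B,B\Rightarrow A,u,v)$, which by the definition $(A\Leftrightarrow B):=(A\Rightarrow B)\wedge(B\Rightarrow A)$ has the required type. Your additional remarks on the well-formedness of the arrow types and the argument order are just the routine side conditions implicit in the paper's one-line construction.
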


\subsubsection{Universal Quantifier}
The universal quantifier $\forall$ is defined through the dependent product:

\begin{flagderiv}
\introduce*{}{S:*_s\;|\;P:S\rightarrow *_p}{}
\step*{}{\text{Definition  }\forall (S,P)\;:=\Pi x:S.Px\;:\;*_p}{}
\step*{} {\text{Notation}: \mathbf{(\forall x:S.Px)} \text{ for }\forall (S,P)}{}
\end{flagderiv}

\subsubsection{Existential Quantifier}

These are derived rules for the existential quantifier $\exists$.
\vspace{-0.2cm}
\begin{flagderiv}
\introduce*{}{S:*_s\;|\;P:S\rightarrow *_p}{}
\introduce*{}{y:S\;|\;u:Py}{}
\step*{}{\boldsymbol{\exists}\textbf{-in}(S,P,y,u)  \;:\;(\exists x:S.Px)}{}
\done
\assume*{}{C:*_p}{}
\assume*{}{u:(\exists x:S.Px)\;|\;v:(\forall x:S.(Px\Rightarrow C))}{}
\step*{}{\boldsymbol{\exists}\textbf{-el}(S,P,u,C,v)  \;:\;C}{}
\end{flagderiv}

Here $x$ is not a free variable in $C$. 

\subsubsection{Classical Logic}
We use mostly intuitionistic logic. But sometimes classical logic is needed; in these cases we add the following \textbf{Axiom of Excluded Third}:
\vspace{-0.2cm}
\begin{flagderiv}
\introduce*{}{A:*_p}{}
\step*{}{\textbf{exc-thrd}(A):=\Bot \;:\;A\vee\neg A}{}
\end{flagderiv}

This axiom implies the \textbf{Double Negation} theorem:
\vspace{-0.2cm}
\begin{flagderiv}
\introduce*{}{A:*_p}{}
\step*{}{\textbf{doub-neg}(A):(\neg\neg A\Rightarrow A)}{}
\end{flagderiv}

\subsection{Sets in $\lambda D$}
Here we briefly repeat some definitions from \cite{Ned14}
relating to sets, in particular, subsets of type $S$.

\begin{flagderiv}
\introduce*{}{S:*_s}{}
\step*{}{\boldsymbol{ps(S)}:=S\rightarrow *_p}{\textbf{Power set of S}}
\introduce*{}{V:ps(S)}{}
\step*{}{\text{Notation: }
\boldsymbol{\{x:S\;|\;x\varepsilon V\}}\text{ for }\lambda x:S.Vx
}{}
\introduce*{}{x: S}{}
\step*{}{\boldsymbol{element}(S,x,V):=Vx:*_p}{}
\step*{}{\text{Notation: }x\varepsilon_S V \text{ or }x\varepsilon V\text{ for }element(S,x,V)}{}
\end{flagderiv}

Thus, a subset $V$ of $S$ is regarded as a predicate on $S$ and $x\varepsilon V$ means $x$ satisfies the predicate $V$.

\section{Intensional Equality in $\lambda D$ }
\label{sect_int_equality}

Here we introduce intensional equality for elements of any type; we will call it just equality. In the next section we will introduce extensional equality and the axiom of extensionality relating the two types of equality.

\begin{flagderiv}
\introduce*{}{S:*}{}
\introduce*{}{x,y: S}{}
\step*{}{eq(S,x,y):=\Pi P:S\rightarrow *_p. (Px\Rightarrow Py):*_p}{}
\step*{} {\text{Notation}: \boldsymbol{x=_Sy} \text{ for } eq(S,x,y)}{\textbf{Intensional equality}}
\end{flagderiv}

\subsection{Properties of Equality}

\subsubsection{Reflexivity}
The following diagram proves the reflexivity property of equality in $\lambda D$.

\vspace{-0.2cm}
\begin{flagderiv}
\introduce*{}{S:*\;|\;x: S}{}
\introduce*{}{P: S\rightarrow *_p}{}
\step*{}{Px:*_p}{}
\step*{}{a_1:=\lambda u:Px.u:Px\Rightarrow Px}{}
\conclude*{}{eq\mhyphen refl(S,x)=\lambda P:S\rightarrow *_p.a_1:(\Pi P:S\rightarrow *_p.(Px\Rightarrow Px))}{}
\step*{}{\boldsymbol{eq\mhyphen refl}(S,x):x=_Sx}
{}
\end{flagderiv}

Proof terms are constructed similarly for the following properties of Substitutivity, Congruence, Symmetry, and Transitivity (see \cite{Ned14}).

\subsubsection{Substitutivity}

Substitutivity means that equality is consistent with predicates of corresponding types.

\vspace{-0.2cm}
\begin{flagderiv}
\introduce*{}{S:*}{}
\introduce*{}{P: S\rightarrow *_p}{}
\introduce*{}{x,y: S\;|\;u:x=_Sy\;|\;v:Px}{}
\step*{}{\boldsymbol{eq\mhyphen subs}
(S,P,x,y,u,v):Py}{}
\end{flagderiv}

\subsubsection{Congruence}

Congruence means that equality is consistent with functions of corresponding types.
\vspace{-0.2cm}

\begin{flagderiv}
\introduce*{}{Q,S:*}{}
\introduce*{}{f: Q\rightarrow S}{}
\introduce*{}{x,y: Q\;|\;u:x=_Qy}{}
\step*{}{\boldsymbol{eq\mhyphen cong}
(Q,S,f,x,y,u):fx=_Sfy}{}
\end{flagderiv}

\subsubsection{Symmetry}
The following diagram expresses the symmetry property of equality in $\lambda D$.

\begin{flagderiv}
\introduce*{}{S:*}{}
\introduce*{}{x,y: S\;|\;u:x=_Sy}{}
\step*{}{\boldsymbol{eq\mhyphen sym
}(S,x,y,u):y=_Sx}{}
\end{flagderiv}

\subsubsection{Transitivity}
The following diagram expresses the transitivity property of equality in $\lambda D$.

\begin{flagderiv}
\introduce*{}{S:*}{}
\introduce*{}{x,y,z: S\;|\;u:x=_Sy\;|\;v:y=_Sz}{}
\step*{}{\boldsymbol{eq\mhyphen trans}
(S,x,y,z,u,v):x=_Sz}{}
\end{flagderiv}

\section{Relations in Type Theory}

\subsection{Sets in $\lambda D$}
Here we briefly repeat some definitions from \cite{Ned14}
relating to sets, in particular, subsets of type $S$.

\begin{flagderiv}
\introduce*{}{S:*_s}{}
\step*{}{\boldsymbol{ps(S)}:=S\rightarrow *_p}{\textbf{Power set of S}}
\introduce*{}{V:ps(S)}{}
\step*{}{\text{Notation: }
\boldsymbol{\{x:S\;|\;x\varepsilon V\}}\text{ for }\lambda x:S.Vx
}{}
\introduce*{}{x: S}{}
\step*{}{\boldsymbol{element}(S,x,V):=Vx:*_p}{}
\step*{}{\text{Notation: }x\varepsilon_S V \text{ or }x\varepsilon V\text{ for }element(S,x,V)}{}
\end{flagderiv}

Thus, a subset $V$ of $S$ is regarded as a predicate on $S$ and $x\varepsilon V$ means $x$ satisfies the predicate $V$.

\subsection{Defining Binary Relations in $\lambda D$}
Binary relations are introduced in \cite{Ned14}, together with the properties of reflexivity, symmetry, antisymmetry, and transitivity, and definitions of equivalence relation and partial order. We use them as a starting point for formalizing the theory of binary relations in $\lambda D$.

A relation on $S$ is a binary predicate on $S$, which is regarded in $\lambda D$ as a composition of unary predicates. For brevity we introduce the type $br(S)$ of all binary relations on $S$: 

\begin{flagderiv}
\introduce*{}{S: *_s}{}

\step*{}{\text{Definition  }\boldsymbol{br(S)}:= S\rightarrow S\rightarrow *_p \;:\;\square}{}
\end{flagderiv}

In the rest of the article we call binary relations just relations.
The equality of relations and operations on relations are defined similarly to the set equality and set operations.

Next we define the extensional equality of relations vs the intentional equality introduced in the previous section.

\begin{flagderiv}
\introduce*{}{S: *_s}{}
\introduce*{}{R,Q:br(S)}{}
\step*{}{\text{Definition  }\subseteq(S,R,Q)\;:=(\forall x,y:S.(Rxy\Rightarrow Qxy))\;:\;*_p}{}
\step*{} {\text{Notation}: \boldsymbol{R\subseteq Q} \text{ for } \subseteq(S,R,Q)}{}
\step*{}{\text{Definition  }Ex\mhyphen eq(S,R,Q)\;:=R\subseteq Q\wedge Q\subseteq R\;:\;*_p}{}
\step*{} {\text{Notation}: \boldsymbol{R=Q} \text{ for } Ex\mhyphen eq(S,R,Q)}{\textbf{ Extensional equality}}
\end{flagderiv}

We add to the theory $\lambda D$ the following axiom of extensionality for relations. 

\begin{flagderiv}
\introduce*{}{S: *_s}{}
\introduce*{}{R,Q:br(S)}{}
\assume*{}{u:R=Q}{}
\step*{}{\boldsymbol{ext\mhyphen axiom}(S,R,Q,u)\;:=\Bot\;:\; R=_{br(S)}Q}{\textbf{Extensionality Axiom}}
\end{flagderiv}

The axiom is introduced in the last line by a primitive definition with the symbol $\Bot$ replacing a non-existing proof term.
The Extensionality Axiom states that the two types of equality are the same for binary relations. So we will use the symbol = for both and we will not elaborate on details of applying the axiom of extensionality when converting one type of equality to the other.

\subsection{Operations on Binary Relations}

Using the flag format, we introduce the identity relation $id_S$ on type $S$ and converse  $R^{-1}$ of a relation $R$.

\begin{flagderiv}
\introduce*{}{S: *_s}{}
\step*{}{\text{Definition  }id_S\;:=\lambda x,y:S.(x=_Sy)\;:\;br(S)}{\textbf{Identity relation}}

\introduce*{}{R:br(S)}{}
\step*{}{\text{Definition  }conv(S,R)\;:=\lambda x,y:S.(Ryx)\;:\;br(S)}{}
\step*{} {\text{Notation}: \boldsymbol{R^{-1}} \text{ for } conv(S,R)}{\textbf{Converse relation}}
\end{flagderiv}

Next we introduce the  operations of union $\cup$,  intersection $\cap$, and composition $\circ$ of relations.

\begin{flagderiv}
\introduce*{}{S: *_s}{}
\introduce*{}{R,Q:br(S)}{}

\step*{}{\text{Definition  }\cup(S,R,Q)\;:=\lambda x,y:S.(Rxy\vee Qxy)\;:\;br(S)}{}
\step*{} {\text{Notation}: \boldsymbol{R\cup Q} \text{ for } \cup(S,R,Q)}{\textbf{Union}}

\step*{}{\text{Definition  }\cap(S,R,Q)\;:=\lambda x,y:S.(Rxy\wedge Qxy)\;:\;br(S)}{}
\step*{} {\text{Notation}: \boldsymbol{R\cap Q} \text{ for } \cap(S,R,Q)}{\textbf{Intersection}}

\step*{}{\text{Definition  }\circ(S,R,Q)\;:=\lambda x,y:S.(\exists z:S. (Rxz\wedge Qzy))\;:\;br(S)}{}
\step*{} {\text{Notation}: \boldsymbol{R\circ Q} \text{ for } \circ(S,R,Q)}{\textbf{Composition}}
\end{flagderiv}

\subsection{Properties of Operations}

The following two technical lemmas will be used in some future proofs.

\begin{lemma}
This lemma gives a shortcut for constructing an element of a composite relation.

\begin{flagderiv}
\introduce*{}{S: *_s\;|\;R,Q:br(S)\;|\;x,y,z:S}{}
\assume*{}{u:Rxy\;|\;v:Qyz}{}
\step*{}{a:=\wedge\text{-in }(Rxy,Qyz,u,v)\;:\;Rxy\wedge Qyz}{}
\step*{}{\textbf{prod-term }(S,R,Q,x,y,z,u,v):=\exists\text{-in }(S,\lambda t.Rxt\wedge Qtz,y,a)\;:\;(R\circ Q)xz}{}
\end{flagderiv}
\label{lemma_prod-term}
\end{lemma}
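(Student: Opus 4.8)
The plan is to obtain the claimed proof term by directly combining the introduction rule for conjunction with the introduction rule for the existential quantifier, and then checking that the resulting type matches $(R\circ Q)xz$ up to the conversion rule of $\lambda D$. The crucial point is purely definitional: unfolding $\circ$ and performing the two applications shows that $(R\circ Q)xz$ is convertible to $\exists t:S.(Rxt\wedge Qtz)$, so it is enough to build a proof term of this existential statement.

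First I would open a flag introducing $S:*_s$, $R,Q:br(S)$, $x,y,z:S$, and then a nested flag assuming $u:Rxy$ and $v:Qyz$. In this context, applying the derived rule $\wedge\text{-in}$ to $Rxy$, $Qyz$, $u$, $v$ produces $a:=\wedge\text{-in}(Rxy,Qyz,u,v):Rxy\wedge Qyz$. Next, note that $(\lambda t:S.\,Rxt\wedge Qtz)\,y$ $\beta$-reduces to $Rxy\wedge Qyz$; hence, by the conversion rule, $a$ also has type $(\lambda t:S.\,Rxt\wedge Qtz)\,y$. Writing $P:=\lambda t:S.\,Rxt\wedge Qtz:S\rightarrow *_p$, this is exactly the premise needed to invoke $\exists\text{-in}$ with set $S$, predicate $P$, witness $y$, and proof $a$, yielding $\exists\text{-in}(S,P,y,a):(\exists t:S.\,Rxt\wedge Qtz)$.

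Finally I would close the argument by unfolding the definition of composition: $(R\circ Q)xz$ equals $\big(\lambda x',y':S.(\exists t:S.(Rx't\wedge Qty'))\big)\,x\,z$ (after the necessary renaming of the bound variables to avoid capture), which $\beta$-reduces to $\exists t:S.(Rxt\wedge Qtz)$. By the conversion rule the term $\exists\text{-in}(S,P,y,a)$ therefore has type $(R\circ Q)xz$, and we define $\textbf{prod-term}(S,R,Q,x,y,z,u,v):=\exists\text{-in}(S,\lambda t.Rxt\wedge Qtz,y,a)$, completing the flag derivation.

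The argument involves no genuine obstacle: the only thing requiring care — and it is entirely routine — is the bookkeeping of the $\delta$- and $\beta$-conversions that identify $(R\circ Q)xz$ with $\exists t:S.(Rxt\wedge Qtz)$ and $P\,y$ with $Rxy\wedge Qyz$; once these conversions are in place, the lemma is an immediate two-step application of the already established introduction rules for $\wedge$ and $\exists$.
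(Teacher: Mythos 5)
Your proposal is correct and follows essentially the same route as the paper: apply $\wedge$-in to $u$ and $v$, then $\exists$-in with witness $y$ and predicate $\lambda t:S.\,Rxt\wedge Qtz$, identifying the result type with $(R\circ Q)xz$ by unfolding the definition of $\circ$. The only difference is that you spell out the $\beta$/$\delta$-conversion bookkeeping explicitly, which the paper's flag derivation leaves implicit.
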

\vspace{-0.5cm}

\begin{lemma}
This lemma gives a shortcut for proving equality of two relations.

\begin{flagderiv}
\introduce*{}{S: *_s\;|\;R,Q:br(S)}{}
\assume*{}{u:R\subseteq Q\;|\;v:Q\subseteq R}{}
\step*{}{\boldsymbol{rel\mhyphen equal }(S,R,Q,u,v):=\wedge\text{-in }(R\subseteq Q,Q\subseteq R,u,v)\;:\;R= Q}{}
\end{flagderiv}
\label{lemma_rel_equal}
\end{lemma}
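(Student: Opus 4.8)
The plan is to read off the statement of Lemma~\ref{lemma_rel_equal} as a direct instance of the conjunction introduction rule, just as the analogous Lemma~\ref{lemma_bi-impl} was obtained for bi-implication. Recall from the definition of extensional equality of relations that $R = Q$ is literally $Ex\mhyphen eq(S,R,Q)$, which unfolds to $R\subseteq Q \wedge Q\subseteq R$. So proving $R = Q$ is the same task as proving this conjunction, and the only ingredients needed are a proof of the left conjunct and a proof of the right conjunct.

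First I would open a flag introducing $S:*_s$ and $R,Q:br(S)$, so that the subset relation $\subseteq$ and the extensional equality are available on these particular relations. Next I would open a nested flag assuming $u:R\subseteq Q$ and $v:Q\subseteq R$; at this point both $R\subseteq Q:*_p$ and $Q\subseteq R:*_p$ are derivable types (they are instances of the $\subseteq$ definition, which produces a term of type $*_p$), so the conjunction rule $\boldsymbol{\wedge}\textbf{-in}$ applies with $A:=R\subseteq Q$, $B:=Q\subseteq R$, and the two hypotheses $u,v$. Applying that rule yields $\wedge\text{-in}(R\subseteq Q,\,Q\subseteq R,\,u,\,v)\;:\;R\subseteq Q\wedge Q\subseteq R$. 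Finally I would record this term as the definition of $rel\mhyphen equal(S,R,Q,u,v)$ and rewrite its type using the definitional unfolding $R\subseteq Q\wedge Q\subseteq R \equiv Ex\mhyphen eq(S,R,Q) \equiv (R=Q)$, closing the assumption flag to expose $rel\mhyphen equal$ as a proof term parametrized by $S,R,Q,u,v$.

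There is essentially no obstacle here: the lemma is a convenience wrapper that names a one-line derivation, and the only subtlety worth mentioning is the silent conversion between the notation $R=Q$ and the expanded form $R\subseteq Q\wedge Q\subseteq R$, which is justified by $\delta$-reduction (unfolding the definitions of $Ex\mhyphen eq$ and of the notational abbreviations) and needs no appeal to the Extensionality Axiom, since we stay entirely within the extensional equality. The derivation therefore mirrors the pattern already used for $\textbf{bi-impl}$ in Lemma~\ref{lemma_bi-impl}, differing only in that the two conjuncts are subset-inclusions of relations rather than implications of propositions.
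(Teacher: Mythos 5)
Your proposal is correct and matches the paper's own derivation: the lemma is exactly the conjunction-introduction term $\wedge\text{-in}(R\subseteq Q, Q\subseteq R, u, v)$, whose type unfolds definitionally to $Ex\mhyphen eq(S,R,Q)$, i.e.\ $R=Q$, with no appeal to the Extensionality Axiom. Nothing further is needed.
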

\vspace{-0.5cm}

\begin{theorem}
For relations $R,P$ and $Q$ on $S$ the following hold.
\medskip

1) $(R^{-1})^{-1}=R.$
\medskip

2) $(R\circ Q)^{-1}=Q^{-1}\circ R^{-1}.$
\medskip

3) $(R\cap Q)^{-1}=R^{-1}\cap Q^{-1}.$
\medskip

4) $(R\cup Q)^{-1}=R^{-1}\cup Q^{-1}.$

5) $R\circ(P\cup Q)=R\circ P\cup R\circ Q.$
\medskip

6) $(P\cup Q)\circ R=P\circ R\cup Q\circ R.$
\medskip

7) $R\circ(P\cap Q)\subseteq R\circ P\cap R\circ Q.$
\medskip

8) $(P\cap Q)\circ R\subseteq P\circ R\cap Q\circ R.$
\medskip

9) $(R\circ P)\circ Q=R\circ(P\circ Q).$
\label{theorem_operations}
\end{theorem}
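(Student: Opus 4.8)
The plan is to prove each of the nine identities by unfolding the relevant definitions to reduce everything to a statement of first-order logic about the underlying predicates, and then discharge that statement with the introduction/elimination rules from Section~\ref{subs_logic}. For the equalities (items 1--6 and 9) I will invoke Lemma~\ref{lemma_rel_equal}, so in each case it suffices to produce two subset proof terms $u:R\subseteq Q$ and $v:Q\subseteq R$; for the inclusions (items 7 and 8) I only need the single subset term. Recall that $R\subseteq Q$ unfolds to $\forall x,y:S.(Rxy\Rightarrow Qxy)$, so every subset term is built by introducing fresh variables $x,y:S$ (two universal-quantifier introductions, i.e. $\lambda$-abstractions), assuming $w:Rxy$, and exhibiting a proof of $Qxy$; then wrapping with the appropriate $\lambda$'s. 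After $\delta$- and $\beta$-reduction, $Rxy$ and $Qxy$ become concrete propositional formulas in the component relations, so the core of each argument is a short propositional or first-order derivation.

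Concretely, items 1, 3, 4 are essentially immediate: after unfolding, $(R^{-1})^{-1}xy$ reduces to $Ryx$ reduced again to $Rxy$ (convertibility, no logical content); $(R\cap Q)^{-1}xy$ reduces to $Ryx\wedge Qyx$ which is definitionally $R^{-1}xy\wedge Q^{-1}xy$; similarly for $\cup$ using $\boldsymbol{\vee}\textbf{-el}$ to transport a disjunct. Item 2 is a small existential manipulation: $(R\circ Q)^{-1}xy$ unfolds to $\exists z:S.(Ryz\wedge Qzx)$, and I must produce $\exists z:S.(Q^{-1}xz\wedge R^{-1}zy)=\exists z:S.(Qzx\wedge Ryz)$ --- the same witness $z$ works, with $\wedge$-commutation via $\boldsymbol{\wedge}\textbf{-el}_1,\boldsymbol{\wedge}\textbf{-el}_2,\boldsymbol{\wedge}\textbf{-in}$, all packaged by $\boldsymbol{\exists}\textbf{-el}$ and $\boldsymbol{\exists}\textbf{-in}$. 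Items 5 and 6 distribute composition over union: starting from a witness $z$ with $Rxz\wedge(Pzy\vee Qzy)$, case-split on the disjunction with $\boldsymbol{\vee}\textbf{-el}$ to land in $(R\circ P)xy$ or $(R\circ Q)xy$, then $\boldsymbol{\vee}\textbf{-in}_{1,2}$; conversely each disjunct of $(R\circ P\cup R\circ Q)xy$ already gives a witness. Items 7 and 8 are the same pattern but only one direction survives: from a witness $z$ for $R\circ(P\cap Q)$ we get both $Rxz\wedge Pzy$ and $Rxz\wedge Qzy$ with the \emph{same} $z$, hence $(R\circ P\cap R\circ Q)xy$ --- the converse genuinely fails (different witnesses), which is why only $\subseteq$ is claimed. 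The helper Lemma~\ref{lemma_prod-term} streamlines all the $\circ$-related constructions by building $(R\circ Q)xz$ in one step from $u:Rxy$, $v:Qyz$.

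The main obstacle is item 9, associativity of composition, both because it is the only identity with genuinely nested existentials and because careful witness bookkeeping is needed. Unfolding, $((R\circ P)\circ Q)xy$ is $\exists z:S.((R\circ P)xz\wedge Qzy)$, i.e. $\exists z.((\exists t.(Rxt\wedge Ptz))\wedge Qzy)$, while $(R\circ(P\circ Q))xy$ is $\exists t.(Rxt\wedge(\exists z.(Ptz\wedge Qzy)))$. For the left-to-right inclusion I would take the outer witness $z$, apply $\boldsymbol{\exists}\textbf{-el}$ to the inner $\exists t$ to get a $t$ with $Rxt\wedge Ptz$, then supply $t$ as the outer witness on the right and $z$ as the inner witness, reassembling the conjunctions with $\boldsymbol{\wedge}\textbf{-el}/\boldsymbol{\wedge}\textbf{-in}$ (again Lemma~\ref{lemma_prod-term} helps); the right-to-left inclusion is the mirror image. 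The delicate point is that $\boldsymbol{\exists}\textbf{-el}$ requires the conclusion $C$ not to contain the bound variable free, so the eliminations must be nested in the correct order --- eliminate the outer existential first, then the inner one, before introducing any existential on the goal side. Once the quantifier nesting is handled correctly, the propositional residue is just associativity/commutativity of $\wedge$, which is routine. I would relegate the full flag derivation of item 9 (and perhaps 2, 5, 6) to an appendix, as the introduction promises, keeping only items 1, 3, 4, 7 inline as representative short cases.
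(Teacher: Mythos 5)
Your proposal is correct and follows essentially the same route as the paper's Appendix A proof: reduce each identity to one or two subset terms via Lemma~\ref{lemma_rel_equal}, build all composition witnesses with Lemma~\ref{lemma_prod-term}, handle 5/6 by $\vee$-elimination, note that 7/8 only yield an inclusion because the witnesses differ, and prove 9 by eliminating the outer existential before the inner one. The only difference is presentational (which items you would keep inline versus in the appendix), not mathematical.
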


The formal proof is in part A of Appendix. The proof of part 2) has the form:
\begin{flagderiv}
\introduce*{}{S: *_s\;|\;R,Q:br(S)}{}
\skipsteps*{\dots}{}
\step*{}{\boldsymbol{conv\mhyphen prod}(S,R,Q):=\ldots\;:\;(R\circ Q)^{-1}=Q^{-1}\circ R^{-1}}{}
\end{flagderiv}
Its proof term $conv\mhyphen prod(S,R,Q)$ will be re-used later in the paper.
\medskip

\section{Properties of Binary Relations}
The properties of reflexivity, symmetry, antisymmetry, transitivity, and the relations of equivalence and partial order are defined in \cite{Ned14} as follows.

\begin{flagderiv}
\introduce*{}{S:*_s\,|\,R:br(S)}{}

\step*{}{\text{Definition  }\boldsymbol{refl}(S,R)\;:=\forall x:S.(Rxx)\;:\;*_p}{}

\step*{}{\text{Definition  }\boldsymbol{sym(S,R)}\;:=\forall x,y:S.(Rxy\Rightarrow Ryx)\;:\;*_p}{}

\step*{}{\text{Definition  }\boldsymbol{antisym(S,R)}\;:=\forall x,y:S.(Rxy\Rightarrow Ryx\Rightarrow x=_Sy)\;:\;*_p}{}

\step*{}{\text{Definition  }\boldsymbol{trans(S,R)}\;:=\forall x,y,z:S.(Rxy\Rightarrow Ryz\Rightarrow Rxz)\;:\;*_p}{}

\step*{}{\text{Definition  }\boldsymbol{equiv$-$relation}(S,R)\;:=refl(S,R)\wedge sym(S,R)
\wedge \;trans(S,R)\;:\;*_p}{}

\step*{}{\text{Definition  }\boldsymbol{part$-$ord}(S,R)\;:=refl(S,R)\wedge antisym(S,R)
\wedge \;trans(S,R)\;:\;*_p}{}
\end{flagderiv}

\begin{theorem}
Suppose $R$ is a relation on type $S$. Then the following hold.
\medskip

1) \textbf{Criterion of reflexivity.} 
$R$ is reflexive $\Leftrightarrow id_S\subseteq R$.
\medskip

2) \textbf{First criterion of symmetry.}
$R$ is symmetric $\Leftrightarrow R^{-1}\subseteq R$.
\medskip

3) \textbf{Second criterion of symmetry.}
$R$ is symmetric $\Leftrightarrow R^{-1}= R$.
\medskip

4) \textbf{Criterion of antisymmetry.} 
$R$ is antisymmetric $\Leftrightarrow R^{-1}\cap R\subseteq id_S$.
\medskip

5) \textbf{Criterion of transitivity.} $R$ is transitive $\Leftrightarrow R\circ R\subseteq R$.
\label{theorem:criteria}
\end{theorem}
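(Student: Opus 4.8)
The plan is to prove each of the five criteria by establishing the two implications of the bi-implication separately, using \textbf{bi-impl} from Lemma~\ref{lemma_bi-impl} to package them. For each direction I would unfold the relevant definitions ($refl$, $sym$, $antisym$, $trans$, $\subseteq$, $id_S$, $conv$, $\circ$, $\cap$) down to their underlying $\Pi$-types and logical connectives, then manipulate proof terms directly using the derived logical rules from Section~\ref{subs_logic} ($\wedge$-in, $\wedge$-el, $\exists$-in, $\exists$-el) together with the equality terms $eq$-$refl$, $eq$-$subs$, $eq$-$sym$, $eq$-$trans$ from Section~\ref{sect_int_equality}. Part 3) will follow from part 2) together with Lemma~\ref{lemma_rel_equal}: once we know $R$ symmetric gives $R^{-1}\subseteq R$, we also get $R\subseteq R^{-1}$ (immediately, since $(R^{-1})xy = Ryx$ and symmetry applied to $Ryx\Rightarrow Rxy$ — or more directly using part 1) of Theorem~\ref{theorem_operations}, $(R^{-1})^{-1}=R$), so $R^{-1}=R$; conversely $R^{-1}=R$ gives $R^{-1}\subseteq R$, which by 2) yields symmetry.

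Concretely: for 1), given $p:refl(S,R)$, i.e. $p:\forall x:S.(Rxx)$, and given $x,y:S$ with $h:x\varepsilon id_S\, y$, that is $h:x=_Sy$, I would produce $(R x y)$ via $eq$-$subs$ applied to the predicate $\lambda z:S.(Rxz)$, using $h$ and $px:Rxx$; abstracting over $x,y,h$ gives $id_S\subseteq R$. Conversely, from $q:id_S\subseteq R$ and $x:S$, apply $q\,x\,x$ to $eq$-$refl(S,x):x=_Sx$ to get $Rxx$, then abstract over $x$. For 2), symmetry of $R$ says $Rxy\Rightarrow Ryx$; since $(R^{-1})xy$ reduces to $Ryx$, a term of type $R^{-1}\subseteq R$ is essentially $\lambda x,y:S.\lambda h:Ryx.(\text{sym term})\,y\,x\,h$, reading the types modulo $\beta$-reduction; the converse is symmetric in structure. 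For 4), antisymmetry $Rxy\Rightarrow Ryx\Rightarrow x=_Sy$ corresponds exactly to: given $h:(R^{-1}\cap R)xy$, i.e. $h:Ryx\wedge Rxy$, extract both conjuncts with $\wedge$-el and feed them (in the right order) to the antisymmetry term to get $x=_Sy$, which is $x\varepsilon id_S\,y$; the converse reverses this. For 5), transitivity $Rxy\Rightarrow Ryz\Rightarrow Rxz$ must be matched against $R\circ R\subseteq R$: given $h:(R\circ R)xz$, that is $h:\exists w:S.(Rxw\wedge Rwz)$, use $\exists$-el with the motive $Rxz$; the required $\forall w:S.(Rxw\wedge Rwz\Rightarrow Rxz)$ is built from $\wedge$-el$_1$, $\wedge$-el$_2$ and the transitivity term. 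Conversely, from $R\circ R\subseteq R$ and $h_1:Rxy$, $h_2:Ryz$, form $\textbf{prod-term}$ (Lemma~\ref{lemma_prod-term}) to get $(R\circ R)xz$, then apply the inclusion.

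The only genuine subtlety — not a deep obstacle, but the place where care is needed — is the bookkeeping of $\beta$-reduction and definitional unfolding: terms like $(R^{-1})xy$, $(R\circ Q)xy$, $(R\cap Q)xy$ and $x\varepsilon id_S\,y$ are only \emph{convertible} to their intended meanings $Ryx$, $\exists z.(Rxz\wedge Qzy)$, $Rxy\wedge Qxy$, $x=_Sy$, so each proof term must be type-checked up to the conversion rule, and the order of arguments to $eq$-$subs$ and to the $antisym$/$trans$ terms must be tracked precisely. A second point requiring attention is that the criteria are stated with the extensional $\subseteq$ and $=$ on relations, so in part 3) the passage between $R^{-1}=R$ (extensional equality, $R^{-1}\subseteq R\wedge R\subseteq R^{-1}$) and the inclusions is handled purely by $\wedge$-in/$\wedge$-el with no appeal to the Extensionality Axiom. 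The full formal derivations, being long, are relegated to the Appendix; here I have indicated the shape of each of the ten proof terms.
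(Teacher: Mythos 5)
Your proposal is correct and follows essentially the same route as the paper's Appendix B proof: each criterion is split into two implications assembled with \textbf{bi-impl}, part 1) uses $eq$-$subs$ on the predicate $\lambda z:S.Rxz$ and $eq$-$refl$, parts 2) and 3) are handled via both inclusions and $rel$-$equal$ (the paper derives them in a single combined derivation, extracting $R^{-1}\subseteq R$ from $R^{-1}=R$ by $\wedge$-el$_1$, exactly as you indicate), part 4) is the direct $\wedge$-in/$\wedge$-el matching, and part 5) uses $\exists$-el for the forward direction and \textbf{prod-term} for the converse, all up to definitional conversion as you note. No gaps.
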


The formal proof is in part B of Appendix. The proof of part 3) has the form:

\begin{flagderiv}
\introduce*{}{S: *_s\;|\;R:br(S)}{}
\skipsteps*{\dots}{}
\step*{}{\boldsymbol{sym\mhyphen criterion}(S,R):=\ldots\;:\;sym(S,R)\Leftrightarrow R^{-1}=R}{}
\end{flagderiv}
Its proof term $sym\mhyphen criterion(S,R)$ will be re-used later in the paper.
\medskip

\begin{theorem}
Relation $R$ on $S$ is reflexive, symmetric and antisymmetric $\Rightarrow R=id_S.$
\label{theorem:theorem_id_S}
\end{theorem}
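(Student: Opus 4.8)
The plan is to prove the two inclusions $R\subseteq id_S$ and $id_S\subseteq R$ separately and then combine them via $rel\mhyphen equal$ (Lemma~\ref{lemma_rel_equal}) to conclude $R=id_S$. Concretely, I would open a flag introducing $S:*_s$ and $R:br(S)$, then assume proof terms $p:refl(S,R)$, $q:sym(S,R)$ and $r:antisym(S,R)$, and carry out the whole argument inside that context.

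For $R\subseteq id_S$, I would introduce $x,y:S$ and assume $w:Rxy$. Symmetry gives $qxyw:Ryx$, and then antisymmetry gives $rxyw(qxyw):x=_Sy$; since $id_S\,x\,y$ $\beta$-reduces to the proposition $x=_Sy$, this is already a proof of $id_S\,x\,y$. Discharging $w$ and abstracting over $x$ and $y$ yields a proof term of $\forall x,y:S.(Rxy\Rightarrow id_S\,x\,y)$, that is, of $R\subseteq id_S$.

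For $id_S\subseteq R$, the shortest route is to reuse part~1) of Theorem~\ref{theorem:criteria} (the criterion of reflexivity), namely $refl(S,R)\Leftrightarrow id_S\subseteq R$: applying $\wedge\text{-el}_1$ to that bi-implication and then to $p$ delivers $id_S\subseteq R$ immediately. If a self-contained derivation is preferred, one can instead introduce $x,y:S$ and $u:id_S\,x\,y$, read $u$ as a proof of $x=_Sy$, and form $eq\mhyphen subs(S,\lambda t:S.Rxt,x,y,u,px):Rxy$, where $px:Rxx$ comes from reflexivity; abstracting over $x$ and $y$ gives the same inclusion.

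Finally, applying $rel\mhyphen equal(S,R,id_S,u,v)$ with $u$ the proof of $R\subseteq id_S$ and $v$ the proof of $id_S\subseteq R$ produces the required proof of $R=id_S$ (interchangeable with the intensional equality $R=_{br(S)}id_S$ by the Extensionality Axiom). I do not anticipate any real obstacle: the mathematical content is simply that symmetry together with antisymmetry forces $R$ into the diagonal while reflexivity fills the diagonal, and the only thing needing care is the bookkeeping of $\beta$-reductions so that $id_S\,x\,y$ is recognised as $x=_Sy$ and matches the conclusion of $antisym$, plus keeping the flag context tidy.
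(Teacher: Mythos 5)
Your proposal is correct and follows essentially the same route as the paper: both inclusions $R\subseteq id_S$ (via symmetry then antisymmetry) and $id_S\subseteq R$ (via reflexivity and $eq\mhyphen subs$ with the predicate $\lambda t:S.Rxt$, which is exactly the paper's step), combined by $rel\mhyphen equal$. Your optional shortcut of extracting $id_S\subseteq R$ from the criterion of reflexivity in Theorem~\ref{theorem:criteria}.1) is only a cosmetic variation, since that criterion's proof is the same $eq\mhyphen subs$ argument.
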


\begin{proof}
The formal proof is in the following flag diagram.

\begin{flagderiv}
\introduce*{}{S:*_s\,|\,R:br(S)}{}
\assume*{}{u_1:refl(S,R)\;|\;u_2:sym(S,R)\;|\;u_3:antisym(S,R)}{}
\introduce*{}{x,y:S\;|\;v:Rxy}{}
\step*{}{a_1=u_2xyv:Ryx}{}
\step*{}{a_2=u_3xyva_1:x=_Sy}{}
\conclude*{}{a_3:=\lambda x,y:S.\lambda v:Rxy.a_2\;:\;(R\subseteq id_S)}{}
\introduce*{}{x,y:S\;|\;v:(id_S)xy}{}
\step*{}{v:x=_Sy}{}
\step*{}{\text{Notation }P:=\lambda z:S.Rxz\;:\;S\rightarrow *_p}{}
\step*{}{a_4=u_1x:Rxx}{}
\step*{}{a_4:Px}{}
\step*{}{a_5:=eq\mhyphen subs(S,P,x,y,v,a_4):Py}{}
\step*{}{a_5:Rxy}{}
\conclude*{}{a_6:=\lambda x,y:S.\lambda v:(id_S)xy.a_5\;:\;(id_S\subseteq R)}{}
\step*{}{a_7:=rel\mhyphen equal(S,R,id_S,a_3,a_6):R=id_S}{}
\end{flagderiv}
\end{proof}

\begin{theorem}
\textbf{Invariance under converse operation.} Suppose $R$ is a relation on type $S$. Then the following hold.

1) $R$ is reflexive $\Rightarrow R^{-1}$ is reflexive.
\medskip

2) $R$ is symmetric $\Rightarrow R^{-1}$ is symmetric.
\medskip

3) $R$ is antisymmetric $\Rightarrow R^{-1}$ is antisymmetric.
\medskip

4) $R$ is transitive $\Rightarrow R^{-1}$ is transitive.
\label{theorem:converse_invariance}
\end{theorem}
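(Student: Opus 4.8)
The plan is to prove all four parts by unfolding the definition of $R^{-1}$, namely $R^{-1}xy := Ryx$, and then invoking the corresponding hypothesis about $R$ with the variables swapped. Each part is a short flag derivation under the common context $S:*_s$, $R:br(S)$, and I would present them together in one diagram, opening sub-flags for each assumption $R$ is reflexive / symmetric / antisymmetric / transitive in turn.

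For part 1), I would assume $u:refl(S,R)$, introduce $x:S$, note that $u x : Rxx$, observe that $Rxx$ is by definition $R^{-1}xx$, and abstract to get a term of type $\forall x:S.(R^{-1}xx)$, i.e. $refl(S,R^{-1})$. For part 2), assume $u:sym(S,R)$, introduce $x,y:S$ and $v:R^{-1}xy$; since $R^{-1}xy$ is $Ryx$, we have $v:Ryx$, so $u\,y\,x\,v : Rxy$, which is $R^{-1}yx$; abstracting gives $sym(S,R^{-1})$. Part 3) is the same pattern: assume $u:antisym(S,R)$, take $x,y:S$, $v:R^{-1}xy$ (so $v:Ryx$) and $w:R^{-1}yx$ (so $w:Rxy$); then $u\,y\,x\,w\,v : y=_S x$, and applying $eq\mhyphen sym$ yields $x=_S y$, which is what $antisym(S,R^{-1})$ requires. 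Part 4) assumes $u:trans(S,R)$, takes $x,y,z:S$, $v:R^{-1}xy$ (i.e. $Ryx$) and $w:R^{-1}yz$ (i.e. $Rzy$); then $u\,z\,y\,x\,w\,v : Rzx$, which is $R^{-1}xz$, and abstraction gives $trans(S,R^{-1})$.

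There is essentially no obstacle here: the only subtlety is the bookkeeping of argument order, since converting $R$-facts to $R^{-1}$-facts systematically swaps the two relational arguments, and in the antisymmetric and transitive cases one must feed the hypotheses in the swapped order as well (and in the antisymmetric case additionally compose with $eq\mhyphen sym$ to restore the orientation of the equality). The definitional equality $R^{-1}xy \equiv Ryx$ is handled silently by $\lambda D$'s conversion rule, so each line of the form "$v:R^{-1}xy$, hence $v:Ryx$" needs no explicit proof term. I expect the finished derivation to be four near-identical blocks, and I would name the resulting proof terms something like $refl\mhyphen conv$, $sym\mhyphen conv$, $antisym\mhyphen conv$, $trans\mhyphen conv$ in case they are reused later.

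\begin{proof}
We argue each part in the common context below; definitional unfolding of $R^{-1}xy$ to $Ryx$ is used silently throughout.

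\begin{flagderiv}
\introduce*{}{S:*_s\,|\,R:br(S)}{}
\assume*{}{u:refl(S,R)}{}
\introduce*{}{x:S}{}
\step*{}{ux:Rxx}{}
\step*{}{ux:R^{-1}xx}{}
\conclude*{}{refl\mhyphen conv(S,R,u):=\lambda x:S.ux\;:\;refl(S,R^{-1})}{}
\assume*{}{u:sym(S,R)}{}
\introduce*{}{x,y:S\;|\;v:R^{-1}xy}{}
\step*{}{v:Ryx}{}
\step*{}{uyxv:Rxy}{}
\step*{}{uyxv:R^{-1}yx}{}
\conclude*{}{sym\mhyphen conv(S,R,u):=\lambda x,y:S.\lambda v:R^{-1}xy.uyxv\;:\;sym(S,R^{-1})}{}
\assume*{}{u:antisym(S,R)}{}
\introduce*{}{x,y:S\;|\;v:R^{-1}xy\;|\;w:R^{-1}yx}{}
\step*{}{v:Ryx}{}
\step*{}{w:Rxy}{}
\step*{}{a_1:=uyxwv:y=_Sx}{}
\step*{}{a_2:=eq\mhyphen sym(S,y,x,a_1):x=_Sy}{}
\conclude*{}{antisym\mhyphen conv(S,R,u):=\lambda x,y:S.\lambda v:R^{-1}xy.\lambda w:R^{-1}yx.a_2\;:\;antisym(S,R^{-1})}{}
\assume*{}{u:trans(S,R)}{}
\introduce*{}{x,y,z:S\;|\;v:R^{-1}xy\;|\;w:R^{-1}yz}{}
\step*{}{v:Ryx}{}
\step*{}{w:Rzy}{}
\step*{}{uzyxwv:Rzx}{}
\step*{}{uzyxwv:R^{-1}xz}{}
\conclude*{}{trans\mhyphen conv(S,R,u):=\lambda x,y,z:S.\lambda v:R^{-1}xy.\lambda w:R^{-1}yz.uzyxwv\;:\;trans(S,R^{-1})}{}
\end{flagderiv}
\end{proof}
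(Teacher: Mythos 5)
Your parts 1), 2) and 4) are exactly the paper's proof, down to the proof terms ($\lambda x:S.ux$, $uyxv$, $uzyxwv$), and the silent use of the conversion $R^{-1}xy\equiv Ryx$ is handled the same way there. The one defect is in part 3): with your own (correct) typings $v:R^{-1}xy\equiv Ryx$ and $w:R^{-1}yx\equiv Rxy$, the instantiation $uyx$ has type $Ryx\Rightarrow Rxy\Rightarrow y=_Sx$, so the term $uyxwv$ is ill-typed --- you feed $w:Rxy$ where an inhabitant of $Ryx$ is expected. The intended term is $uyxvw:y=_Sx$, after which your $eq\mhyphen sym$ step does give $x=_Sy$; alternatively, and more economically, do what the paper does: keep the instantiation unswapped, $uxy:Rxy\Rightarrow Ryx\Rightarrow x=_Sy$, and apply it as $uxywv$ to obtain $x=_Sy$ directly, so no appeal to $eq\mhyphen sym$ is needed at all. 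This is an easily repaired bookkeeping slip (precisely the argument-order subtlety you flagged yourself), not a conceptual gap; with that one correction the derivation matches the paper's.
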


\begin{proof}
1) 
\begin{flagderiv}
\introduce*{}{S:*_s\,|\,R:br(S)}{}
\assume*{}{u:refl(S,R)}{}
\introduce*{}{x:S}{}
\step*{}{ux:Rxx}{}
\step*{}{ux:R^{-1}xx}{}
\conclude*{}{a:=\lambda x:S.ux\;:\;refl(S,R^{-1})}{}
\end{flagderiv}

\newpage
2) 
\vspace{-0.2cm}
\begin{flagderiv}
\introduce*{}{S:*_s\,|\,R:br(S)}{}
\assume*{}{u:sym(S,R)}{}
\introduce*{}{x,y:S\;|\;v:R^{-1}xy}{}
\step*{}{v:Ryx}{}
\step*{}{uyx:(Ryx\Rightarrow Rxy)}{}
\step*{}{a_1:=uyxv\;:\;Rxy}{}
\step*{}{a_1:R^{-1}yx}{}
\conclude*{}{a_2:=\lambda x,y:S.\lambda v:R^{-1}xy.a_1\;:\;sym(S,R^{-1})}{}
\end{flagderiv}

3) 
\vspace{-0.2cm}

\begin{flagderiv}
\introduce*{}{S:*_s\,|\,R:br(S)}{}
\assume*{}{u:antisym(S,R)}{}
\introduce*{}{x,y:S\;|\;v:R^{-1}xy\;|\;w:R^{-1}yx}{}
\step*{}{v:Ryx}{}
\step*{}{w:Rxy}{}
\step*{}{uxy:(Rxy\Rightarrow Ryx\Rightarrow x=y)}{}
\step*{}{a_1:=uxywv\;:\;x=y}{}
\conclude*{}{a_2:=\lambda x,y:S.\lambda v:R^{-1}xy.\lambda w:R^{-1}yx.a_1\;:\;antisym(S,R^{-1})}{}
\end{flagderiv}

4)  
\vspace{-0.2cm}

\begin{flagderiv}
\introduce*{}{S:*_s\,|\,R:br(S)}{}
\assume*{}{u:trans(S,R)}{}
\introduce*{}{x,y,z:S\;|\;v:R^{-1}xy\;|\;w:R^{-1}yz}{}
\step*{}{w:Rzy}{}
\step*{}{v:Ryx}{}
\step*{}{uzyx:(Rzy\Rightarrow Ryx\Rightarrow Rzx)}{}
\step*{}{a_1:=uzyxwv\;:\;Rzx}{}
\step*{}{a_1:R^{-1}xz}{}
\conclude*{}{a_2:=\lambda x,y,z:S.\lambda v:R^{-1}xy.\lambda w:R^{-1}yz.a_1\;:\;trans(S,R^{-1})}{}
\end{flagderiv}
\end{proof}

\begin{theorem}
\textbf{Invariance under intersection.} Suppose $R$ and $Q$ are relations on type $S$. Then the following hold.

1) $R$ and $Q$ are reflexive $\Rightarrow R\cap Q$ is reflexive.

2)  $R$ and $Q$ are symmetric $\Rightarrow R\cap Q$ is symmetric.

3)  $R$ or $Q$ is antisymmetric $\Rightarrow R\cap Q$ is antisymmetric.

4)  $R$ and $Q$ are transitive $\Rightarrow R\cap Q$ is transitive.
\label{theorem:intersection_invariance}
\end{theorem}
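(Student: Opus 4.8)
The plan is to prove each of the four parts by a direct flag-style derivation, unfolding the relevant definitions of $R\cap Q$, $refl$, $sym$, $antisym$, and $trans$, and then using the corresponding hypotheses on $R$ and $Q$ together with the conjunction introduction/elimination terms $\wedge\text{-in}$, $\wedge\text{-el}_1$, $\wedge\text{-el}_2$. Recall that $(R\cap Q)xy$ is by definition $Rxy\wedge Qxy$, so every step amounts to destructuring a conjunction into its two components, applying the relevant universally quantified hypothesis to each component, and reassembling a conjunction.

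For part 1), I would open flags $S:*_s$, $R,Q:br(S)$, assume $u_1:refl(S,R)$ and $u_2:refl(S,Q)$, introduce $x:S$, form $u_1x:Rxx$ and $u_2x:Qxx$, then $\wedge\text{-in}(Rxx,Qxx,u_1x,u_2x):(R\cap Q)xx$, and abstract over $x$ to conclude $refl(S,R\cap Q)$. Part 2) is analogous: assume $u_1:sym(S,R)$, $u_2:sym(S,Q)$, introduce $x,y:S$ and $v:(R\cap Q)xy$; from $v$ extract $\wedge\text{-el}_1(Rxy,Qxy,v):Rxy$ and $\wedge\text{-el}_2(Rxy,Qxy,v):Qxy$, apply $u_1xy$ and $u_2xy$ to get $Ryx$ and $Qyx$, combine with $\wedge\text{-in}$ to get $(R\cap Q)yx$, and abstract. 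Part 4) follows the same pattern with $trans$: given $v:(R\cap Q)xy$ and $w:(R\cap Q)yz$, split both into their $R$- and $Q$-parts, apply $u_1xyz$ and $u_2xyz$ to obtain $Rxz$ and $Qxz$, and recombine.

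Part 3) is the one that differs structurally, and it is the step I expect to be the main obstacle — not because it is deep, but because the hypothesis is a disjunction rather than a conjunction, so the derivation must branch. I would assume $u:antisym(S,R)\vee antisym(S,Q)$, introduce $x,y:S$, $v:(R\cap Q)xy$, $w:(R\cap Q)yx$, and then argue by cases using $\vee\text{-el}$ with target proposition $x=_Sy$: in the branch where $R$ is antisymmetric I use $\wedge\text{-el}_1(Rxy,Qxy,v):Rxy$ and $\wedge\text{-el}_1(Ryx,Qyx,w):Ryx$ together with $u_Rxy$ to get $x=_Sy$; in the branch where $Q$ is antisymmetric I use the second projections of $v$ and $w$ analogously. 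Wrapping these two case-terms as $\lambda$-abstractions of type $antisym(S,R)\Rightarrow (x=_Sy)$ and $antisym(S,Q)\Rightarrow (x=_Sy)$ and feeding them to $\vee\text{-el}$ yields $x=_Sy$; abstracting over $x,y,v,w$ gives $antisym(S,R\cap Q)$. The only bookkeeping subtlety is making sure the disjunction elimination is applied at the right scope (inside the flags for $x,y,v,w$) so that $x=_Sy$ is a closed-enough proposition for $\vee\text{-el}$.
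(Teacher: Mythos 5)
Your proposal is correct and follows essentially the same route as the paper's proof: parts 1), 2) and 4) are done exactly as in the paper by destructuring $(R\cap Q)$ with $\wedge\text{-el}_1$, $\wedge\text{-el}_2$, applying the hypotheses on $R$ and $Q$, and reassembling with $\wedge\text{-in}$, and part 3) is handled by a case split with $\vee\text{-el}$ on $antisym(S,R)\vee antisym(S,Q)$. The only (inessential) difference is in part 3): you apply $\vee\text{-el}$ inside the flags for $x,y,v,w$ with target $x=_Sy$, whereas the paper first builds proof terms of the implications $antisym(S,R)\Rightarrow antisym(S,R\cap Q)$ and $antisym(S,Q)\Rightarrow antisym(S,R\cap Q)$ and performs a single $\vee\text{-el}$ at the outer level with target $antisym(S,R\cap Q)$; both placements are legitimate in $\lambda D$ since the target is a well-formed proposition in the current context.
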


\begin{proof}
1) 
\vspace{-0.2cm}
\begin{flagderiv}
\introduce*{}{S:*_s\,|\,R,Q:br(S)}{}
\assume*{}{u:refl(S,R)\;|\;v:refl(S,Q)}{}
\introduce*{}{x:S}{}
\step*{}{a_1:=ux\;:\;Rxx}{}
\step*{}{a_2:=vx\;:\;Qxx}{}
\step*{}{a_3:=\wedge\text{-in }(Rxx,Qxx,a_1,a_2)\;:\;(R\cap Q)xx}{}
\conclude*{}{a_4:=\lambda x:S.a_3\;:\;refl(S,R\cap Q)}{}
\end{flagderiv}

2) \vspace{-0.2cm}
\begin{flagderiv}
\introduce*{}{S:*_s\,|\,R,Q:br(S)}{}
\assume*{}{u:sym(S,R)\;|\;v:sym(S,Q)}{}
\introduce*{}{x,y:S\;|\;w:(R\cap Q)xy}{}
\step*{}{w:Rxy\wedge Qxy}{}
\step*{}{a_1:=\wedge\text{-el}_1(Rxy,Qxy,w)\;:\;Rxy}{}
\step*{}{a_2:=\wedge\text{-el}_2(Rxy,Qxy,w)\;:\;Qxy}{}
\step*{}{a_3:=uxya_1\;:\;Ryx}{}
\step*{}{a_4:=vxya_2\;:\;Qyx}{}
\step*{}{a_5:=\wedge\text{-in }(Ryx,Qyx,a_3,a_4)\;:\;(R\cap Q)yx}{}
\conclude*{}{a_6:=\lambda x,y:S.\lambda w:(R\cap Q)xy. a_5\;:\;sym(S,R\cap Q)}{}
\end{flagderiv}

3) \vspace{-0.2cm}
\begin{flagderiv}
\introduce*{}{S:*_s\,|\,R,Q:br(S)}{}
\step*{}{\text{Notation }A:=antisym(S,R):*_p}{}
\step*{}{\text{Notation }B:=antisym(S,Q):*_p}{}
\step*{}{\text{Notation }C:=antisym(S,R\cap Q):*_p}{}
\assume*{}{u:A\vee B}{}
\assume*{}{v:A}{}
\introduce*{}{x,y:S\;|\;w_1:(R\cap Q)xy\;|\;w_2:(R\cap Q)yx}{}
\step*{}{w_1:Rxy\wedge Qxy}{}
\step*{}{a_1:=\wedge\text{-el}_1(Rxy,Qxy,w_1)\;:\;Rxy}{}
\step*{}{w_2:Ryx\wedge Qyx}{}
\step*{}{a_2:=\wedge\text{-el}_1(Ryx,Qyx,w_2)\;:\;Ryx}{}
\step*{}{vxy:(Rxy\Rightarrow Ryx\Rightarrow x=y)}{}
\step*{}{a_3:=vxya_1a_2\;:\;x=y}{}
\conclude*[2]{}{a_4:=\lambda v:A.\lambda x,y:S.\lambda w_1:(R\cap Q)xy.\lambda w_2:(R\cap Q)yx.a_3\;:\;(A\Rightarrow C)}{}

\assume*{}{v:B}{}
\introduce*{}{x,y:S\;|\;w_1:(R\cap Q)xy\;|\;w_2:(R\cap Q)yx}{}
\step*{}{w_1:Rxy\wedge Qxy}{}
\step*{}{a_5:=\wedge\text{-el}_2(Rxy,Qxy,w_1)\;:\;Qxy}{}
\step*{}{w_2:Ryx\wedge Qyx}{}
\step*{}{a_6:=\wedge\text{-el}_2(Ryx,Qyx,w_2)\;:\;Qyx}{}
\step*{}{vxy:(Qxy\Rightarrow Qyx\Rightarrow x=y)}{}
\step*{}{a_7:=vxya_5a_6\;:\;x=y}{}
\conclude*[2]{}{a_8:=\lambda v:B.\lambda x,y:S.\lambda w_1:(R\cap Q)xy.\lambda w_2:(R\cap Q)yx.a_7\;:\;(B\Rightarrow C)}{}
\step*{}{a_9:=\vee\mhyphen el(A,B,C,u,a_4,a_8):C}{}
\step*{}{a_9:antisym(S,R\cap Q)}{}
\end{flagderiv}

4) \vspace{-0.3cm}
\begin{flagderiv}
\introduce*{}{S:*_s\,|\,R,Q:br(S)}{}
\assume*{}{u_1:trans(S,R)\;|\;u_2:trans(S,Q)}{}
\introduce*{}{x,y,z:S\;|\;v:(R\cap Q)xy\;|\;w:(R\cap Q)yz}{}
\step*{}{v:Rxy\wedge Qxy}{}
\step*{}{a_1:=\wedge\text{-el}_1(Rxy,Qxy,v)\;:\;Rxy}{}
\step*{}{a_2:=\wedge\text{-el}_2(Rxy,Qxy,v)\;:\;Qxy}{}
\step*{}{w:Ryz\wedge Qyz}{}
\step*{}{a_3:=\wedge\text{-el}_1(Ryz,Qyz,w)\;:\;Ryz}{}
\step*{}{a_4:=\wedge\text{-el}_2(Ryz,Qyz,w)\;:\;Qyz}{}
\step*{}{a_5:=u_1xyza_1a_3\;:\;Rxz}{}
\step*{}{a_6:=u_2xyza_2a_4\;:\;Qxz}{}
\step*{}{a_7:=\wedge\text{-in }(Rxz,Qxz,a_5,a_6)\;:\;(R\cap Q)xz}{}
\conclude*{}{a_8:=\lambda x,y,z:S.\lambda v:(R\cap Q)xy.\lambda w:(R\cap Q)yz.a_7\;:\;trans(S,R\cap Q)}{}
\end{flagderiv}
\end{proof}

\begin{theorem}
\textbf{Invariance under union.} Suppose $R$ and $Q$ are relations on type $S$. Then the following hold.
\medskip

1) $R$ or $Q$ is reflexive $\Rightarrow R\cup Q$ is reflexive.
\medskip

2)  $R$ and $Q$ are symmetric $\Rightarrow R\cup Q$ is symmetric.
\label{theorem:union_invariance}
\end{theorem}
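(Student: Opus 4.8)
The plan is to treat each part by unfolding the relevant definitions and then applying the introduction and elimination rules for disjunction from Section~\ref{subs_logic}, in the same style as part 3) of Theorem~\ref{theorem:intersection_invariance}. Recall that $(R\cup Q)xy$ is definitionally $Rxy\vee Qxy$, that $refl(S,T)$ abbreviates $\forall x:S.Txx$, and that $sym(S,T)$ abbreviates $\forall x,y:S.(Txy\Rightarrow Tyx)$; after $\delta$- and $\beta$-reduction the goal in both parts is therefore a universally quantified statement whose matrix contains a disjunction, and the work is to build the witnessing term.

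For part 1), I would open a flag with $S:*_s$, $R,Q:br(S)$ and the assumption $u:refl(S,R)\vee refl(S,Q)$, and do the case split directly on $u$. From $v:refl(S,R)$, for every $x:S$ one has $vx:Rxx$, hence $\boldsymbol{\vee}\textbf{-in}_1(Rxx,Qxx,vx):Rxx\vee Qxx$, i.e. $(R\cup Q)xx$; abstracting over $x$ gives a term of type $refl(S,R\cup Q)$, so we obtain a term of type $refl(S,R)\Rightarrow refl(S,R\cup Q)$. Symmetrically, $v:refl(S,Q)$ yields a term of type $refl(S,Q)\Rightarrow refl(S,R\cup Q)$ using $\boldsymbol{\vee}\textbf{-in}_2$. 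Applying $\boldsymbol{\vee}\textbf{-el}$ to $u$ and these two implications, with $C:=refl(S,R\cup Q)$, finishes the proof.

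For part 2), I would introduce $S:*_s$, $R,Q:br(S)$, the assumptions $u:sym(S,R)$ and $v:sym(S,Q)$, and then $x,y:S$ together with $w:(R\cup Q)xy$, where $w$ has type $Rxy\vee Qxy$ by unfolding. The case split is now on $w$: from $w_1:Rxy$ we get $uxyw_1:Ryx$ and then $\boldsymbol{\vee}\textbf{-in}_1(Ryx,Qyx,uxyw_1):(R\cup Q)yx$, while from $w_2:Qxy$ we get $vxyw_2:Qyx$ and then $\boldsymbol{\vee}\textbf{-in}_2(Ryx,Qyx,vxyw_2):(R\cup Q)yx$. Feeding these two implications and $w$ into $\boldsymbol{\vee}\textbf{-el}$ with $C:=(R\cup Q)yx$ produces $(R\cup Q)yx$; abstracting over $w$, then over $x,y$, gives a term of type $sym(S,R\cup Q)$.

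Nothing here is deep: no classical logic is needed, and the only mildly fiddly point is packaging the two branches of each disjunction elimination as honest implication terms $A\Rightarrow C$ and $B\Rightarrow C$ with the $\lambda$-abstractions in the right order, so that the contexts are discharged in the correct sequence — exactly the bookkeeping carried out by the two nested concluding steps in part 3) of Theorem~\ref{theorem:intersection_invariance}. I would expect the theorem to stop at parts 1) and 2) precisely because $R\cup Q$ need not be antisymmetric or transitive even when $R$ and $Q$ are, so no analogous clauses hold.
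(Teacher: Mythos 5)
Your proposal is correct and follows essentially the same route as the paper: in both parts the goal is reduced by unfolding $(R\cup Q)$ to a disjunction, each branch is handled with the symmetry/reflexivity hypothesis plus $\vee\text{-in}_1$ or $\vee\text{-in}_2$, and the branches are packaged as implications fed into $\vee\text{-el}$. The only cosmetic difference is in part 1), where the paper builds the second implication by instantiating a parametrized term $a_4(Q,R)$ with the relations swapped rather than deriving it directly as you do.
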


\begin{proof}
1) \vspace{-0.3cm}
\begin{flagderiv}
\introduce*{}{S:*_s\,|\,R,Q:br(S)}{}
\assume*{}{u:refl(S,R)\;|\;x:S}{}
\step*{}{ux\;:\;Rxx}{}
\step*{}{a_1:=\vee\text{-in}_1(Rxx,Qxx,ux)\;:\;(R\cup Q)xx}{}
\step*{}{a_2:=\vee\text{-in}_2(Rxx,Qxx,ux)\;:\;(Q\cup R)xx}{}
\conclude*{}{a_3:=\lambda u:refl(S,R).\lambda x:S.a_1\;:\;(refl(S,R)\Rightarrow refl(S,R\cup Q))}{}
\step*{}{a_4(R,Q):=\lambda u:refl(S,R).\lambda x:S.a_2\;:\;(refl(S,R)\Rightarrow refl(S,Q\cup R))}{}
\step*{}{a_5:=a_4(Q,R)\;:\;(refl(S,Q)\Rightarrow refl(S,R\cup Q))}{}
\assume*{}{u:refl(S,R)\vee refl(S,Q)}{}
\step*{}{a_7:=\vee\mhyphen el(refl(S,R),refl(S,Q),refl(S,R\cup Q),u,a_3,a_5)\;:\;refl(S,R\cup Q)}{}
\end{flagderiv}

2) \vspace{-0.2cm}
\begin{flagderiv}
\introduce*{}{S:*_s\,|\,R,Q:br(S)}{}
\assume*{}{u_1:sym(S,R)\;|\;u_2:sym(S,Q)}{}
\introduce*{}{x,y:S\;|\;v:(R\cup Q)xy}{}
\step*{}{v:Rxy\vee Qxy}{}
\assume*{}{w:Rxy}{}
\step*{}{a_1:=u_1xyw\;:\;Ryx}{}
\step*{}{a_2:=\vee\text{-in}_1(Ryx,Qyx,a_1)\;:\;(R\cup Q)yx}{}
\conclude*{}{a_3:=\lambda w:Rxy. a_2\;:\;(Rxy\Rightarrow(R\cup Q)yx)}{}

\assume*{}{w:Qxy}{}
\step*{}{a_4:=u_2xyw\;:\;Qyx}{}
\step*{}{a_5:=\vee\text{-in}_2(Ryx,Qyx,a_4)\;:\;(R\cup Q)yx}{}
\conclude*{}{a_6:=\lambda w:Qxy. a_5\;:\;(Qxy\Rightarrow(R\cup Q)yx)}{}
\step*{}{a_7:=\vee\text{-el}(Rxy,Qxy,(R\cup Q)yx,v,a_3,a_6)\;:\;(R\cup Q)yx}{}
\conclude*{}{a_8:=\lambda x,y:S.\lambda v:(R\cup Q)xy. a_7\;:\;sym(S,R\cup Q)}{}
\end{flagderiv}
\end{proof}

\begin{theorem}
\textbf{Invariance under composition.} Suppose $R$ and $Q$ are relations on type $S$. Then the following hold. 
\medskip

1) $R\circ R^{-1}$ is always symmetric.
\smallskip

2) $R$ and $Q$ are reflexive $\Rightarrow R\circ Q$ is reflexive.
\smallskip

3) Suppose $R$ and $Q$ are symmetric. Then
\[R\circ Q\text{ is symmetric }\Leftrightarrow R\circ Q=Q\circ R.\]
\label{theorem:composition_invariance}
\end{theorem}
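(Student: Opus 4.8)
The plan is to treat the three parts separately, reusing the named proof terms $conv\mhyphen prod$ (Theorem~\ref{theorem_operations}, part~2) and $sym\mhyphen criterion$ (Theorem~\ref{theorem:criteria}, part~3) wherever possible, and otherwise unfolding composition via the shortcut of Lemma~\ref{lemma_prod-term}. For part~1, working under the flag $S:*_s\,|\,R:br(S)$, I would introduce $x,y:S$ and an assumption $v:(R\circ R^{-1})xy$, which unfolds to $\exists z:S.(Rxz\wedge R^{-1}zy)$, i.e. $\exists z:S.(Rxz\wedge Ryz)$ since $R^{-1}zy$ reduces to $Ryz$. Applying existential elimination with target $C:=(R\circ R^{-1})yx$ (the bound variable $z$ does not occur in $C$, so the side condition is met), it remains, for each $z:S$ and each $a:Rxz\wedge Ryz$, to produce a proof of $(R\circ R^{-1})yx$; this is exactly the term delivered by Lemma~\ref{lemma_prod-term} applied to the components $Ryz$ and $Rxz$ of $a$ (again using that $R^{-1}zx$ reduces to $Rxz$). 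Abstracting over $x,y,v$ yields a term of type $sym(S,R\circ R^{-1})$. This part can also be derived from $sym\mhyphen criterion$ together with $conv\mhyphen prod(S,R,R^{-1})$ and Theorem~\ref{theorem_operations}, part~1.

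For part~2, under $S:*_s\,|\,R,Q:br(S)$ with assumptions $u:refl(S,R)$ and $v:refl(S,Q)$, I would introduce $x:S$, form $ux:Rxx$ and $vx:Qxx$, and apply the shortcut of Lemma~\ref{lemma_prod-term} with witness $z:=x$ to obtain $(R\circ Q)xx$; abstracting over $x$ gives $refl(S,R\circ Q)$. Part~3 is the substantive one, and the bi-implication would be assembled with the lemma \textbf{bi-impl}. The key auxiliary fact is the equality $e:(R\circ Q)^{-1}=Q\circ R$. I would obtain it by starting from $conv\mhyphen prod(S,R,Q):(R\circ Q)^{-1}=Q^{-1}\circ R^{-1}$; extracting $R^{-1}=R$ and $Q^{-1}=Q$ by feeding the symmetry hypotheses $u_1:sym(S,R)$, $u_2:sym(S,Q)$ into the appropriate halves of $sym\mhyphen criterion(S,R)$ and $sym\mhyphen criterion(S,Q)$; rewriting $Q^{-1}\circ R^{-1}$ into $Q\circ R$ using these equalities and congruence of equality with respect to the composition operation (equivalently, monotonicity of $\circ$); and chaining the pieces with transitivity of equality.

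With $e$ available, the forward direction takes an assumption $s:sym(S,R\circ Q)$, derives $(R\circ Q)^{-1}=R\circ Q$ from $sym\mhyphen criterion$, and combines this with $e$ via symmetry and transitivity of equality to conclude $R\circ Q=Q\circ R$. The backward direction takes $h:R\circ Q=Q\circ R$ and combines it with $e$, again via symmetry and transitivity of equality, to recover $(R\circ Q)^{-1}=R\circ Q$, whence $sym(S,R\circ Q)$ by the other half of $sym\mhyphen criterion$. Finally the two implications are packed into $R\circ Q\text{ is symmetric}\Leftrightarrow R\circ Q=Q\circ R$ by \textbf{bi-impl}.

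I expect the main obstacle to be part~3 — concretely, the bookkeeping that converts between intensional and extensional equality of relations (each rewriting step formally routes through the Extensionality Axiom) and getting the congruence step and the chain of transitivity/symmetry applications in the right order and with the right orientations. A purely direct alternative for part~3 also works and sidesteps the equality juggling: unfold both compositions, use existential elimination and Lemma~\ref{lemma_prod-term}, and push the existential witness through the symmetry hypotheses $u_1,u_2$ and the relevant subset half of the assumed equality (for the forward direction one additionally applies $s$), finishing with $rel\mhyphen equal$; but this is longer and reuses $conv\mhyphen prod$ nowhere, so I would favour the route through $conv\mhyphen prod$ and $sym\mhyphen criterion$.
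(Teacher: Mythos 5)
Your proposal is correct and follows essentially the same route as the paper: parts 1) and 2) are handled exactly as in the paper via unfolding, existential elimination and the \textit{prod-term} shortcut, and part 3) reuses $conv\mhyphen prod$ and $sym\mhyphen criterion$ to establish $(R\circ Q)^{-1}=Q\circ R$ and then closes both directions with symmetry/transitivity of equality and \textbf{bi-impl}. The only cosmetic difference is that the paper performs the rewriting of $Q^{-1}\circ R^{-1}$ into $Q\circ R$ by two applications of $eq\mhyphen subs$ with explicit predicates rather than by a congruence step, which is an equivalent mechanism.
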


\vspace{-1cm}

\begin{proof}
1) \vspace{-0.2cm}
\begin{flagderiv}
\introduce*{}{S:*_s\,|\,R:br(S)}{}
\introduce*{}{x,y:S\,|\,u:(R\circ R^{-1})xy}{}

\step*{} {\text{Notation }P:=\lambda z:S.Rxz\wedge R^{-1}zy\;:\;S\rightarrow*_p}{}
\step*{}{u:(\exists z:S.Pz)}{}
\introduce*{}{z:S\;|\;v:Pz}{}
\step*{}{v:Rxz\wedge R^{-1}zy}{}
\step*{}{a_1:=\wedge\text{-el}_1(Rxz,R^{-1}zy,v):Rxz}{}
\step*{}{a_2:=\wedge\text{-el}_2(Rxz,R^{-1}zy,v):R^{-1}zy}{}
\step*{}{a_2:Ryz}{}
\step*{}{a_1:R^{-1}zx}{}
\step*{}{a_3:=\text{prod-term }(S,R,R^{-1},y,z,x,a_2,a_1)\;:\;(R\circ R^{-1})yx}{}
\conclude*{}{a_4:=\lambda z:S.\lambda v:Pz. a_3\;:\;(\forall z:S.(Pz\Rightarrow (R\circ R^{-1})yx))}{}
\step*{}{a_5:=\exists\text{-el }(S,P,u,(R\circ R^{-1})yx,a_4): (R\circ R^{-1})yx}{}
\conclude*{}{a_6:=\lambda x,y:S.\lambda u:(R\circ R^{-1})xy. a_5\;:\;sym(S,R\circ R^{-1})}{}
\end{flagderiv}

2)
\vspace{-0.2cm}
\begin{flagderiv}
\introduce*{}{S:*_s\;|\;R,Q:br(S)}{}
\assume*{}{u:refl(S,R)\;|\;v:refl(S,Q)}{}
\introduce*{}{x:S}{}
\step*{}{ux\;:\;Rxx}{}
\step*{}{vx\;:\;Qxx}{}
\step*{}{a_1:=\text{prod-term }(S,R,Q,x,x,x,ux,vx)\;:\;(R\circ Q)xx}{}
\conclude*{}{a_2:=\lambda x:S.a_1\;:\;refl(S,R\circ Q)}{}
\end{flagderiv}

3) Here we use the proof term $sym\mhyphen criterion(S,R)$ from Theorem \ref{theorem:criteria}.3) for the second criterion of symmetry and the proof term $conv\mhyphen prod$ from 
Theorem \ref{theorem_operations}.2).

\begin{flagderiv}
\introduce*{}{S:*_s}{}
\introduce*{}{R:br(S)}{}
\step*{}{a_1:=sym\mhyphen criterion(S,R)\;:\;sym(S,R)\Leftrightarrow (R^{-1}=R)}{}
\step*{}{a_2(R):=\wedge\text{-el}_1(sym(S,R)\Rightarrow (R^{-1}=R),(R^{-1}=R)\Rightarrow sym(S,R),a_1)\;
:\;
sym(S,R)\Rightarrow (R^{-1}=R)}{}
\step*{}{a_3(R):=\wedge\text{-el}_2(sym(S,R)\Rightarrow (R^{-1}=R),(R^{-1}=R)\Rightarrow sym(S,R),a_1)\;
:\;(R^{-1}=R)\Rightarrow sym(S,R)}{}
\done
\introduce*{}{R,Q:br(S)\;|\;u:sym(S,R)\;|\;v:sym(S,Q)}{}
\step*{}{a_4:=a_2(R)u:(R^{-1}=R)}{}
\step*{}{a_5:=a_2(Q)v:(Q^{-1}=Q)}{}
\step*{}{a_6:=conv\mhyphen prod(S,R,Q)\;:\;(R\circ Q)^{-1}=Q^{-1}\circ R^{-1}}{}
\step*{} {\text{Notation }P_1:=\lambda K:br(S).((R\circ Q)^{-1}=K\circ R^{-1})\;:\;br(S)\rightarrow *_p}{}
\step*{} {\text{Notation }P_2:=\lambda K:br(S).((R\circ Q)^{-1}=Q\circ K)\;:\;br(S)\rightarrow *_p}{}
\step*{}{a_6:P_1(Q^{-1})}{}
\step*{}{a_7:=eq\mhyphen subs(br(S),P_1, Q^{-1},Q,a_5,a_6)\;:\;
(R\circ Q)^{-1}=Q\circ R^{-1}}{}
\step*{}{a_7:P_2(R^{-1})}{}
\step*{}{a_8:=eq\mhyphen subs(br(S),P_2, R^{-1},R,a_4,a_7)\;:\;
(R\circ Q)^{-1}=Q\circ R}{}
\step*{} {\text{Notation }A:=sym(S,R\circ Q)\;:\;*_p}{}
\step*{} {\text{Notation }B:=(R\circ Q=Q\circ R)\;:\;*_p}{}

\assume*{}{w:A}{}
\step*{}{a_9:=a_2(R\circ Q)w\;:\;(R\circ Q)^{-1}=R\circ Q}{}
\step*{}{a_{10}:=eq\mhyphen sym(br(S),(R\circ Q)^{-1},R\circ Q,a_9)\;:\;R\circ Q=(R\circ Q)^{-1}}{}
\step*{}{a_{11}:=eq\mhyphen trans(br(S),R\circ Q,(R\circ Q)^{-1},Q\circ R,a_{10},a_8)\;:\;R\circ Q=Q\circ R}{}
\conclude*{}{a_{12}:=\lambda w:A.a_{11}\;:\;A\Rightarrow B}{}

\assume*{}{w:B}{}
\step*{}{w:(R\circ Q=Q\circ R)}{}
\step*{}{a_{13}:=eq\mhyphen sym(br(S),R\circ Q,Q\circ R,w)\;:\;Q\circ R=R\circ Q}{}
\step*{}{a_{14}:=eq\mhyphen trans(br(S),(R\circ Q)^{-1},Q\circ R,R\circ Q,a_8,a_{13})\;:\;(R\circ Q)^{-1}=R\circ Q}{}
\step*{}{a_{15}:=a_3(R\circ Q)a_{14}\;:\;sym(S,R\circ Q)}{}
\conclude*{}{a_{16}:=\lambda w:B.a_{15}\;:\;B\Rightarrow A}{}
\conclude*{}{a_{17}:=bi\mhyphen impl(A,B,a_{12},a_{16}):(sym(S,R\circ Q)\Leftrightarrow R\circ Q=Q\circ R)}{}
\end{flagderiv}
\end{proof}

\vspace{-0.3cm}

\section{Special Binary Relations}

\subsection{Equivalence Relation and Partition}

\begin{theorem}
\textbf{Invariance of equivalence relation under converse operation and intersection.} Suppose $R$ and $Q$ are equivalence relations on type $S$. Then the following hold.
\smallskip

1) $R^{-1}$ is an equivalence relation on $S$.
\medskip

2) $R\cap Q$ is  an equivalence relation on $S$.
\end{theorem}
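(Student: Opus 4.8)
The plan is to reduce both parts to the component-wise invariance results already established, since by definition $equiv\text{-}relation(S,R)$ is the conjunction $refl(S,R)\wedge sym(S,R)\wedge trans(S,R)$. In each part the work is purely one of assembly: unpack the hypothesis (or hypotheses) into their reflexivity, symmetry, and transitivity components with the $\wedge\text{-el}$ projections, feed each component to the matching invariance lemma from the previous section, and repackage the three resulting proof terms with $\wedge\text{-in}$.

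For part 1), I would open a flag with $S:*_s$, $R:br(S)$ and assume $u:equiv\text{-}relation(S,R)$. Reading $equiv\text{-}relation(S,R)$ as $refl(S,R)\wedge(sym(S,R)\wedge trans(S,R))$, one application of $\wedge\text{-el}_1$ yields a term of type $refl(S,R)$, while $\wedge\text{-el}_2$ followed by $\wedge\text{-el}_1$ and $\wedge\text{-el}_2$ yields terms of types $sym(S,R)$ and $trans(S,R)$. Applying Theorem \ref{theorem:converse_invariance}.1), .2), .4) to these three terms produces proof terms of types $refl(S,R^{-1})$, $sym(S,R^{-1})$, and $trans(S,R^{-1})$, which two nested $\wedge\text{-in}$ steps combine into a proof of $equiv\text{-}relation(S,R^{-1})$; abstracting over $u$ closes the flag.

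For part 2), proceed analogously with a flag $S:*_s$, $R,Q:br(S)$ and assumptions $u:equiv\text{-}relation(S,R)$, $v:equiv\text{-}relation(S,Q)$. Unpack each of $u$ and $v$ into its three components as above, then apply Theorem \ref{theorem:intersection_invariance}.1), .2), .4) — each of which consumes the corresponding component of $R$ together with that of $Q$ — to obtain proofs of $refl(S,R\cap Q)$, $sym(S,R\cap Q)$, and $trans(S,R\cap Q)$. Combine these with $\wedge\text{-in}$ to get $equiv\text{-}relation(S,R\cap Q)$ and abstract over $u$ and $v$.

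There is no genuine obstacle here; the argument is entirely routine. The only point requiring a little care is the association convention for the ternary conjunction in the definition of $equiv\text{-}relation$, namely that it is parsed as $refl(S,R)\wedge\bigl(sym(S,R)\wedge trans(S,R)\bigr)$, so that the $\wedge\text{-el}$ projections must be nested in the matching order; under the opposite convention the projections would simply be reordered, and the $\wedge\text{-in}$ assembly adjusted to match.
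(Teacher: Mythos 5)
Your proposal is correct and follows exactly the route the paper takes: the paper likewise derives 1) from Theorem \ref{theorem:converse_invariance}.1), 2), 4) and 2) from Theorem \ref{theorem:intersection_invariance}.1), 2), 4) by unpacking and repackaging the conjunction, and simply omits the formal flag derivation. The only cosmetic point is that elsewhere in the paper (the partition proof) the ternary conjunction in $equiv\mhyphen relation$ is treated as left-associated, $(refl \wedge sym)\wedge trans$, so your $\wedge$-eliminations would be nested the other way round, a detail you already anticipated.
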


\begin{proof}
1) can easily be derived from Theorem 
\ref{theorem:converse_invariance}.1), 2), 4) using intuitionistic logic.

2) can easily be derived from Theorem \ref{theorem:intersection_invariance}.1), 2), 4) using intuitionistic logic.

We skip the formal proofs.
\end{proof}

Next we formalize the fact that there is a correspondence between equivalence relations on $S$ and partitions of $S$.
Equivalence classes are introduced in \cite{Ned14} as follows.

\begin{flagderiv}
\introduce*{}{S:*_s\;|\;R:br(S)\;|\;u:equiv\mhyphen rel(S,R)}{}
\introduce*{}{x:S}{}
\step*{}{\boldsymbol{class}(S,R,u,x):=\{y:S\;|\;Rxy\}:ps(S)}{}
\step*{}{\text{Notation }\boldsymbol{[x]_R}\text{ for }class(S,R,u,x)}{}
\end{flagderiv}

Next we define a partition of type $S$:

\begin{flagderiv}
\introduce*{}{S:*_s\;|\;R:S\rightarrow ps(S)}{}
\step*{}{\boldsymbol{partition}(S,R):=(\forall x:S.x\varepsilon Rx)\wedge \forall x,y,z:S.(z\varepsilon Rx\Rightarrow z\varepsilon Ry\Rightarrow Rx=Ry))}{}
\end{flagderiv}

As usual, we can regard a partition $R$ as a collection $Rx\,( x\in S)$ of subsets of $S$. From this point of view, the above diagram expresses the standard two facts for a partition:

\begin{enumerate}
\item any element of $S$ belongs to one of subsets from the collection (namely $Rx$);
\item if the intersection of two subsets $Rx$ and $Ry$ is non-empty, then they coincide.
\end{enumerate}

(1) implies that each subset from the collection is non-empty and that the union of all subsets from the collection is $S$.

\begin{theorem}
Any equivalence relation $R$ on type $S$ is a partition of $S$ and vice versa.
\end{theorem}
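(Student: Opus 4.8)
The statement asserts a bijective-style correspondence, so the plan is to split it into two directions and construct, in each direction, an explicit object (a partition from an equivalence relation, and an equivalence relation from a partition) together with a proof term witnessing the defining conjunction. For the forward direction, given $S:*_s$, $R:br(S)$ and a proof $u:equiv\mhyphen rel(S,R)$, the natural candidate is the map $x\mapsto [x]_R$, i.e. $class(S,R,u,\cdot):S\rightarrow ps(S)$. I would first extract the three components $refl$, $sym$, $trans$ from $u$ using $\wedge\text{-el}_1,\wedge\text{-el}_2$. Then I would discharge the two clauses of $partition(S,class(S,R,u,\cdot))$: the clause $\forall x:S.\,x\,\varepsilon\,[x]_R$ unfolds to $\forall x:S.\,Rxx$, which is exactly reflexivity; the clause $\forall x,y,z:S.(z\,\varepsilon\,[x]_R\Rightarrow z\,\varepsilon\,[y]_R\Rightarrow [x]_R=[y]_R)$ unfolds to: from $Rxz$ and $Ryz$ derive $[x]_R=[y]_R$. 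The latter is the standard "equivalence classes of related elements coincide" argument — using symmetry and transitivity one shows $Rxw\Leftrightarrow Ryw$ for all $w:S$, hence $[x]_R\subseteq[y]_R$ and $[y]_R\subseteq[x]_R$, and then $rel\mhyphen equal$ (or rather the analogous set-equality shortcut, which here is just $\wedge\text{-in}$ on the two inclusions together with the extensionality convention already adopted in the paper) yields $[x]_R=[y]_R$.

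For the reverse direction, given a partition $R:S\rightarrow ps(S)$ with proof $u:partition(S,R)$, the induced relation is $R':=\lambda x,y:S.(y\,\varepsilon\,Rx):br(S)$; I would then prove $equiv\mhyphen rel(S,R')$ by producing proof terms for its three conjuncts. Reflexivity of $R'$ is $\forall x:S.\,x\,\varepsilon\,Rx$, which is the first clause of the partition. For symmetry, from $y\,\varepsilon\,Rx$ I need $x\,\varepsilon\,Ry$: apply the second partition clause with the triple $(x,y,y)$ — since $y\,\varepsilon\,Rx$ and $y\,\varepsilon\,Ry$ (the latter from reflexivity), we get $Rx=Ry$, and then $eq\mhyphen subs$ transports $x\,\varepsilon\,Rx$ across this equality to $x\,\varepsilon\,Ry$. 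For transitivity, from $y\,\varepsilon\,Rx$ and $z\,\varepsilon\,Ry$: apply the second clause to $(x,y,y)$ to get $Rx=Ry$, transport $z\,\varepsilon\,Ry$ back to $z\,\varepsilon\,Rx$ by $eq\mhyphen subs$. Finally, assemble the three terms with two applications of $\wedge\text{-in}$ to obtain $equiv\mhyphen rel(S,R')$. All of this is routine once the unfoldings of $\varepsilon$, $class$, and $partition$ are written out.

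I would present the proof as a single flag derivation (or two consecutive ones, one per direction), opening flags for $S:*_s$, then in the first block $R:br(S)$ and $u:equiv\mhyphen rel(S,R)$ and building the partition, and in the second block $R:S\rightarrow ps(S)$ and $u:partition(S,R)$ and building the equivalence relation, using only $\wedge$-introduction/elimination, $eq\mhyphen subs$, and the definitions already in place. Given its length I expect the paper to relegate the full derivation to an appendix and only state the two resulting proof terms here, in the same style as Theorems \ref{theorem_operations} and \ref{theorem:criteria}.

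\textbf{Main obstacle.} The genuine mathematical content — and the step most prone to bookkeeping errors — is showing $[x]_R=[y]_R$ from $Rxz$ and $Ryz$ in the forward direction: it requires chaining symmetry and transitivity in the right order (first $Rzx$ and $Rzy$ from $sym$, then for arbitrary $w$ use $trans$ twice to pass between $Rxw$ and $Ryw$), producing two inclusion terms, and then invoking the extensionality convention to turn the pair of inclusions into an equality of elements of $ps(S)$. A secondary subtlety is that the paper's $rel\mhyphen equal$ lemma is stated for $br(S)$, not for $ps(S)$, so I would either prove the obvious one-line analogue for subsets or appeal directly to the extensionality axiom for $ps(S)$ in the same spirit as $ext\mhyphen axiom$; I expect the cleanest route is to note that $ps(S)=S\rightarrow *_p = br(S')$ is not literally true, so the paper likely introduces (or silently uses) a set-extensionality convention parallel to the relation one. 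Keeping the two notions of equality coherent here is the one place where care is needed.
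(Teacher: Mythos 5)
Your proposal is correct and follows essentially the same route as the paper: both directions are handled by unfolding the definitions, using reflexivity for the first partition clause, and using $eq\mhyphen subs$ to transport membership along $Rx=Ry$ for symmetry and transitivity. The only differences are cosmetic or minor: the paper observes that $S\rightarrow ps(S)$ and $br(S)$ are literally the same type $S\rightarrow S\rightarrow *_p$, so your explicit conversion maps $x\mapsto[x]_R$ and $R':=\lambda x,y:S.(y\,\varepsilon\,Rx)$ are just ($\eta$-expansions of) $R$ itself and no correspondence maps are needed; the second partition clause in the forward direction (your ``main obstacle,'' including the set-extensionality issue) is not re-proved but cited from Nederpelt--Geuvers, p.~291; and your transitivity step needs one extra $eq\mhyphen sym$ (or the instantiation $(z,x,y)$ used in the paper) since $eq\mhyphen subs$ only transports in the direction of the given equality.
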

\begin{proof}
The type of partitions of $S$ is $S\rightarrow ps(S)$, which is $S\rightarrow S\rightarrow *_p$, and it is the same as the type $br(S)$ of relations on $S$. The proof consists of two steps. 
\medskip

\textit{Step 1. Any equivalence relation is a partition.}

\begin{flagderiv}
\introduce*{}{S:*_s\;|\;R:S\rightarrow S\rightarrow *_p}{}
\assume*{}{u:equiv\mhyphen rel(S,R)}{}
\step*{}{a_1:=\wedge\mhyphen el_1(refl(S,R),
sym(S,R),\wedge\mhyphen el_1(refl(S,R)\wedge
sym(S,R),trans(S,R),u))
:refl(S,R)}{}
\introduce*{}{x:S}{}
\step*{}{a_2:=a_1x:Rxx}{}
\step*{}{a_2:(x\varepsilon Rx)}{}
\conclude*{}{a_3:=\lambda x:S.a_2\;:\;(\forall x:S.x\varepsilon X)}{}
\end{flagderiv}

This proves the first part of the definition of $partition(S,R)$ and the second part was proven in \cite{Ned14}, pg. 291.
\medskip

\textit{Step 2. Any partition is an equivalence relation.}

\begin{flagderiv}
\introduce*{}{S:*_s\;|\;R:S\rightarrow S\rightarrow *_p}{}
\assume*{}{u:partition(S,R)}{}
\step*{}{\text{Notation }A:=\forall x:S.(x\varepsilon Rx)}{}
\step*{}{\text{Notation }B:=\forall x,y,z:S.(z\varepsilon Rx\Rightarrow z\varepsilon Ry\Rightarrow Rx=Ry)}{}
\step*{}{u:A\wedge B}{}
\step*{}{a_1:=\wedge\mhyphen el_1(A,B,u):A}{}
\step*{}{a_2:=\wedge\mhyphen el_2(A,B,u):B}{}
\introduce*{}{x:S}{}
\step*{}{a_3:=a_1x:x\varepsilon Rx}{}
\step*{}{a_3:Rxx}{}
\conclude*{}{a_4:=\lambda x:S.a_3\;:\;refl(S,R)}{}
\introduce*{}{x,y:S\;|\;v:Rxy}{}
\step*{}{a_5:=a_1y\;:\;(y\varepsilon Ry)}{}
\step*{}{v:(y\varepsilon Rx)}{}
\step*{}{a_6:=a_2xyyva_5\;:\;Rx=Ry}{}
\step*{}{a_7:=a_1x\;:\;(x\varepsilon Rx)}{}
\step*{}{a_8:=eq\mhyphen subs(ps(S),\lambda Z:ps(S).x\varepsilon Z,Rx,Ry,a_6,a_7)\;:\;(x\varepsilon Ry)}{}
\step*{}{a_8:Ryx}{}
\conclude*{}{a_9:=\lambda x,y:S.\lambda v:Rxy.a_8\;:\;sym(S,R)}{}
\introduce*{}{x,y,z:S\;|\;v:Rxy\;|\;w:Ryz}{}
\step*{}{v:y\varepsilon Rx}{}
\step*{}{a_{10}:=a_9yzw\;:\;Rzy}{}
\step*{}{a_{10}:(y\varepsilon Rz)}{}
\step*{}{a_{11}:=a_2zxya_{10}v\;:\;Rz=Rx}{}
\step*{}{a_{12}:=a_1z\;:\;(z\varepsilon Rz)}{}
\step*{}{a_{13}:=eq\mhyphen subs(ps(S),\lambda Z:ps(S).z\varepsilon Z,Rz,Rx,a_{11},a_{12})\;:\;z\varepsilon Rx}{}
\step*{}{a_{13}:Rxz}{}
\conclude*{}{a_{14}:=\lambda x,y,z:S.\lambda v:Rxy.\lambda w:Ryz.a_{13}\;:\;trans(S,R)}{}
\step*{}{a_{15}:=\wedge\mhyphen in(refl(S,R)\wedge sym(S,R),trans(S,R),\wedge\mhyphen  in(refl(S,R),sym(S,R),a_4,a_9)
,a_{14})\;
 : equiv\mhyphen rel(S,R)}{}
\end{flagderiv}
\end{proof}

\subsection{Partial Order}

\begin{theorem}
\textbf{Invariance of partial order under converse operation and intersection.} Suppose $R$ and $Q$ are partial orders on type $S$. Then the following hold.
\medskip

1) $R^{-1}$ is a partial order on $S$.
\medskip

2) $R\cap Q$ is a partial order on $S$.
\end{theorem}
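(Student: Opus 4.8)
The plan is to mimic the proof of the preceding theorem on invariance of equivalence relations: unfold $part\mhyphen ord$ into its three conjuncts $refl$, $antisym$, $trans$, derive each conjunct for the new relation from the component-invariance theorems already established, and reassemble the result with $\wedge\text{-in}$. Since, just as for $equiv\mhyphen rel$, the conjunction in $part\mhyphen ord(S,R):=refl(S,R)\wedge antisym(S,R)\wedge trans(S,R)$ associates to the left as $(refl\wedge antisym)\wedge trans$, the projections and re-assemblies must follow that grouping.

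For part~1), I would open a flag with $S:*_s$, $R:br(S)$ and assume $u:part\mhyphen ord(S,R)$. Applying $\wedge\text{-el}_1$ and $\wedge\text{-el}_2$ to $u$ (twice) yields proof terms $u_1:refl(S,R)$, $u_2:antisym(S,R)$, $u_3:trans(S,R)$. Feeding $u_1,u_2,u_3$ into Theorem~\ref{theorem:converse_invariance}.1), 3), 4) respectively gives $refl(S,R^{-1})$, $antisym(S,R^{-1})$, $trans(S,R^{-1})$, and two applications of $\wedge\text{-in}$ assemble them into $part\mhyphen ord(S,R^{-1})$.

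For part~2), I would open a flag with $S:*_s$, $R,Q:br(S)$, assume $u:part\mhyphen ord(S,R)$ and $v:part\mhyphen ord(S,Q)$, and project out the six conjuncts as above. Reflexivity of $R\cap Q$ follows from Theorem~\ref{theorem:intersection_invariance}.1) applied to $refl(S,R)$ and $refl(S,Q)$; transitivity from Theorem~\ref{theorem:intersection_invariance}.4) applied to $trans(S,R)$ and $trans(S,Q)$. For antisymmetry, Theorem~\ref{theorem:intersection_invariance}.3) requires only that \emph{one} of $R,Q$ be antisymmetric, so I would first inject $antisym(S,R)$ into the disjunction $antisym(S,R)\vee antisym(S,Q)$ via $\vee\text{-in}_1$ and then apply that theorem to obtain $antisym(S,R\cap Q)$. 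Two $\wedge\text{-in}$ steps then give $part\mhyphen ord(S,R\cap Q)$.

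No genuine obstacle is expected: every ingredient is an already-proved theorem, and the remaining work is the purely propositional bookkeeping of taking conjunctions apart and putting them back together — exactly the kind of routine derivation the authors chose to omit in the equivalence-relation case. The only point that warrants care is lining up the argument order of the reused proof terms (and, for part~2), remembering the $\vee\text{-in}_1$ wrapper needed to invoke the asymmetric hypothesis of Theorem~\ref{theorem:intersection_invariance}.3)), so I would likewise simply note that the formal proof is analogous and skip the explicit flag diagram.
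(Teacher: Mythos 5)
Your proposal matches the paper's proof exactly: the paper likewise derives part~1) from Theorem~\ref{theorem:converse_invariance}.1), 3), 4) and part~2) from Theorem~\ref{theorem:intersection_invariance}.1), 3), 4) by routine intuitionistic logic, and skips the formal flag derivations. Your extra remarks about the left-associated conjunction and the $\vee\text{-in}_1$ wrapper for invoking the disjunctive hypothesis of the antisymmetry case are correct details of the same argument.
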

\begin{proof}
1) can easily be derived from Theorem 
\ref{theorem:converse_invariance}.1), 3), 4) using intuitionistic logic.

2) can easily be derived from Theorem \ref{theorem:intersection_invariance}.1), 3), 4) using intuitionistic logic.
We skip the formal proofs.
\end{proof}

\begin{example}
$\subseteq$ is a partial order on the power set $ps(S)$ of type $S$.
\end{example} 

\begin{proof}
This is the formal proof.
\vspace{-0.2cm}
\begin{flagderiv}
\introduce*{}{S:*_s}{}
\step*{}{\text{Notation }R:=\lambda X,Y:ps(S).X\subseteq Y\;:\;br(ps(S))}{}
\step*{}{\text{Notation }A:=refl(ps(S),R)}{}
\step*{}{\text{Notation }B:=antisym(ps(S),R)}{}
\step*{}{\text{Notation }C:=trans(ps(S),R)}{}
\introduce*{}{X:ps(S)}{}
\step*{}{a_1:=\lambda x:S.\lambda u:(x\varepsilon X).u\;:\;X\subseteq X}{}
\conclude*{}{a_2:=\lambda X:ps(S).a_1\;:\;A}{}

\introduce*{}{X,Y:ps(S)\;|\;u:X\subseteq Y\;|\;v:Y\subseteq X}{}
\step*{}{a_3:=\wedge\mhyphen in(X\subseteq Y,Y\subseteq X,u,v)\;:\;X=Y}{}
\conclude*{}{a_4:=\lambda X,Y:ps(S).\lambda u:X\subseteq Y.\lambda v:Y\subseteq X.a_3\;:\;B}{}

\introduce*{}{X,Y,Z:ps(S)\;|\;u:X\subseteq Y\;|\;v:Y\subseteq Z}{}
\introduce*{}{x:S\;|\;w:x\varepsilon X}{}
\step*{}{a_5:=uxw\;:\;(x\varepsilon Y)}{}
\step*{}{a_6:=vxa_5\;:\;(x\varepsilon Z)}{}
\conclude*{}{a_7:=\lambda x:S.\lambda w:(x\varepsilon X).a_6\;:\;X\subseteq Z}{}
\conclude*{}{a_8:=\lambda X,Y,Z:ps(S).\lambda u:X\subseteq Y.\lambda v:Y\subseteq Z.a_7\;:\;C}{}
\step*{}{a_9:=\wedge\mhyphen in(A\wedge B, C,\wedge\mhyphen in(A,B,a_2,a_4),a_8)\;:\;A\wedge B\wedge C}{}
\step*{}{a_9:part\mhyphen ord(ps(S),R)}{}
\end{flagderiv}
\end{proof}

\subsection{Well-Ordering and Transfinite Induction}

We will use the notation $\leqslant$ for partial order. In the following diagram we define the strict order $<$, the least element of a partially ordered set, and well-ordering of type $S$.

\begin{flagderiv}
\introduce*{}{S:*_s\;|\;\leqslant:br(S)\;|\;u:part\mhyphen ord(S,\leqslant)}{}
\step*{}{\text{Definition }<\,:=\lambda x,y:S.(x\leqslant y\wedge \neg (x=y))}{}
\introduce*{}{X:ps(S)\;|\;x:S}{}
\step*{}{\text{Definition }\boldsymbol{least}(S,\leqslant,X,x):=x\varepsilon X\wedge \forall y:S.(y\varepsilon X\Rightarrow x\leqslant y)}{}
\done
\step*{}{\text{Definition }\boldsymbol{well\mhyphen ord}(S,\leqslant):=part\mhyphen ord(S,\leqslant)
\wedge \forall X:ps(S).[\exists x:S.x\varepsilon X\Rightarrow \exists x:S.least(S,\leqslant,X,x)]}{}
\end{flagderiv}

\begin{theorem}
\textbf{Transfinite Induction.} Suppose $\leqslant$ is a well-ordering of type $S$. Then for any predicate $P$ on $S$:
\[\forall x:S.[(\forall y:S.(y<x\Rightarrow Py)\Rightarrow Px]\Rightarrow \forall x:S.Px.\]
\end{theorem}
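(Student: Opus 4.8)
The plan is to run the classical \textbf{least counterexample} argument inside $\lambda D$; since well-ordering is being converted into an induction principle, the proof uses the Axiom of Excluded Third via \textbf{doub-neg}. Work in the context $S:*_s$, $\leqslant:br(S)$, $u:well\mhyphen ord(S,\leqslant)$, $P:S\rightarrow *_p$, and assume the induction hypothesis $h:\forall x:S.[(\forall y:S.(y<x\Rightarrow Py))\Rightarrow Px]$. First unpack $u$: since $well\mhyphen ord$ abbreviates $part\mhyphen ord(S,\leqslant)\wedge[\forall X\ldots]$, the term $\wedge\text{-el}_1(u)$ gives $part\mhyphen ord(S,\leqslant)$, and since $part\mhyphen ord$ abbreviates $(refl\wedge antisym)\wedge trans$, a short chain of $\wedge\text{-el}$ steps peels off $asym:antisym(S,\leqslant)$; meanwhile $\wedge\text{-el}_2(u)$ gives the well-ordering property $w:\forall X:ps(S).[(\exists x:S.x\varepsilon X)\Rightarrow\exists x:S.least(S,\leqslant,X,x)]$. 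Then introduce the \emph{set of counterexamples} $X:=\lambda x:S.\neg Px:ps(S)$, so that $x\varepsilon X$ is definitionally $\neg Px$.

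To prove $\forall x:S.Px$ I would argue pointwise: introduce $x:S$, and by \textbf{doub-neg} it suffices to build a term of $\neg\neg Px$, so assume $t:\neg Px$ and aim at $\bot$. Then $t:x\varepsilon X$, so $\exists\text{-in}$ produces a proof of $\exists z:S.z\varepsilon X$; feeding $X$ and this proof to $w$ gives $\exists z:S.least(S,\leqslant,X,z)$, and $\exists\text{-el}$ (the goal $\bot$ has no free $z$, so the side condition holds) lets the derivation continue under fresh flags $x_0:S$ and $m:least(S,\leqslant,X,x_0)$. Split $m$ by $\wedge\text{-el}$ into $m_1:x_0\varepsilon X$, i.e. $m_1:\neg Px_0$, and $m_2:\forall y:S.(y\varepsilon X\Rightarrow x_0\leqslant y)$. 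The crux is the sub-claim $\forall y:S.(y<x_0\Rightarrow Py)$: introduce $y:S$ and assume $p:y<x_0$, which unfolds to a proof of $y\leqslant x_0\wedge\neg(y=x_0)$; by \textbf{doub-neg} again reduce to $\neg\neg Py$, assume $q:y\varepsilon X$, obtain $m_2\,y\,q:x_0\leqslant y$, apply $asym\,y\,x_0$ to $\wedge\text{-el}_1(p):y\leqslant x_0$ and to $x_0\leqslant y$ to get $y=_S x_0$, and feed this to $\wedge\text{-el}_2(p):\neg(y=x_0)$ to produce $\bot$; discharging $q$ gives $\neg\neg Py$, and \textbf{doub-neg} gives $Py$, so discharging $y$ and $p$ gives the sub-claim $s$. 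Now $h\,x_0\,s:Px_0$, and $m_1$ applied to it yields $\bot$. Discharging outward through the $\exists\text{-el}$ and through $t$ gives $\neg\neg Px$, \textbf{doub-neg} gives $Px$, and discharging $x$ gives $\forall x:S.Px$.

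I expect no genuine mathematical difficulty; the obstacles are organizational. The main one is keeping the nested flag structure correct, in particular the two independent invocations of \textbf{doub-neg} — the inner one turning ``$y$ cannot lie in the counterexample set'' into ``$Py$'', which is exactly where the strength of well-ordering becomes an induction principle, and the outer one discharging the pointwise goal — together with an $\exists\text{-el}$ whose scope must enclose the entire minimal-counterexample argument and the $\exists\text{-in}$ that launches the well-ordering property. A secondary nuisance is the repeated unfolding of definitions: extracting $antisym$ from the doubly-nested conjunction inside $part\mhyphen ord$ inside $well\mhyphen ord$, reading $x_0\varepsilon X$ as $\neg Px_0$, and reading $y<x_0$ as $y\leqslant x_0\wedge\neg(y=x_0)$. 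None of this needs anything beyond the logical rules and \textbf{doub-neg} already available in the excerpt.
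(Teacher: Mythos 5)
Your proposal is correct and follows essentially the same route as the paper: form the counterexample set $X:=\lambda x:S.\neg Px$, apply the well-ordering property to get a least element, use antisymmetry of $\leqslant$ to show every $y<x_0$ satisfies $Py$, contradict via the induction hypothesis, and invoke \textbf{doub-neg} twice (for $Py$ and for $Px$). The only difference is organizational — the paper first derives $\neg(\exists x:S.\,x\varepsilon X)$ as a separate intermediate result and then argues pointwise, whereas you inline the whole least-counterexample argument inside the pointwise $\neg\neg Px$ derivation — which does not change the substance of the proof.
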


\begin{proof}
Here is the formal proof.

\begin{flagderiv}
\introduce*{}{S:*_s\;|\;\leqslant:br(S)\;|\;u_1:well\mhyphen ord(S,\leqslant)\;|\;P:S\rightarrow *_p}{}
\assume*{}{u_2\;:\;\forall x:S.[\forall y:S.(y<x\Rightarrow Py)\Rightarrow Px]}{}
\step*{}{\text{Notation }A:=part\mhyphen ord(S,\leqslant)}{}
\step*{}{\text{Notation }B:=[\forall X:ps(S).(\exists x:S.x\varepsilon X\Rightarrow \exists x:S.least(S,\leqslant,X,x))]}{}
\step*{}{u_1:A\wedge B}{}
\step*{}{a_1:=\wedge\mhyphen el_1(A,B,u_1)\;:A\;}{}
\step*{}{a_2:=\wedge\mhyphen el_2(A,B,u_1)\;:B\;}{}
\step*{}{a_3:=\wedge\mhyphen el_1(refl(S,\leqslant)\wedge antisym(S,\leqslant),trans(S,\leqslant),a_1)
\;:\;refl(S,\leqslant)\wedge antisym(S,\leqslant)}{}
\step*{}{a_4:=\wedge\mhyphen el_2(refl(S,\leqslant),antisym(S,\leqslant),a_3)
\;:\;antisym(S,\leqslant)}{}
\step*{}{\text{Notation }X:=\lambda x:S.\neg Px\;:\;ps(S)}{}
\assume*{}{v_1:(\exists x:S.x\varepsilon X)}{}
\step*{}{a_5:=a_2Xv_1\;:\;[\exists x:S.least(S,\leqslant,X,x)]}{}
\introduce*{}{x:S\;|\;v_2:least(S,\leqslant,X,x)}{}
\step*{}{a_6:=\wedge\mhyphen el_1(x\varepsilon X,\forall y:S.(y\varepsilon X\Rightarrow x\leqslant y),v_2)\;:\;x\varepsilon X}{}
\step*{}{a_6:\neg Px}{}
\step*{}{a_7:=\wedge\mhyphen el_2(x\varepsilon X,\forall y:S.(y\varepsilon X\Rightarrow x\leqslant y),v_2)\;:\;[\forall y:S.(y\varepsilon X\Rightarrow x\leqslant y)]}{}
\introduce*{}{y:S\;|\;w_1:y<x}{}
\step*{}{a_8:=\wedge\mhyphen el_1(y\leqslant x,\neg (x=y),w_1)\;:\;y\leqslant x}{}
\step*{}{a_9:=\wedge\mhyphen el_2(y\leqslant x,\neg (x=y),w_1)\;:\;\neg (x=y)}{}
\assume*{}{w_2:\neg Py}{}
\step*{}{w_2:y\varepsilon X}{}
\step*{}{a_{10}:=a_7yw_2:x\leqslant y}{}
\step*{}{a_{11}:=a_4xya_{10}a_8:x=y}{}
\step*{}{a_{12}:=a_9a_{11}\;:\;\bot}{}
\conclude*{}{a_{13}:=\lambda w_2:\neg Py.a_{12}\;:\;\neg\neg Py}{}
\step*{}{a_{14}:=doub\mhyphen neg(Py)a_{13}\;:\;Py}{}
\conclude*{}{a_{15}:=\lambda y:S.\lambda w_1:y<x.a_{14}\;:\;[\forall y:S.(y<x\Rightarrow Py)]}{}
\step*{}{a_{16}:=u_2xa_{15}\;:\;Px}{}
\step*{}{a_{17}:=a_6a_{16}\;:\;\bot}{}
\conclude*{}{a_{18}:=\lambda x:S.\lambda v_2:least(S,\leqslant, X,x).a_{17}\;:\;[\forall x:S.(least(S,\leqslant, X,x)\Rightarrow \bot)]}{}
\step*{}{a_{19}:=\exists\mhyphen el(S,\lambda x:S.least(S,\leqslant, X,x),a_5,\bot,a_{18})\;:\;\bot}{}
\conclude*{}{a_{20}:=\lambda v_1:(\exists x:S.x\varepsilon X).a_{19}\;:\;\neg (\exists x:S.x\varepsilon X)}{}
\introduce*{}{x:S}{}
\assume*{}{w:\neg Px}{}
\step*{}{w:x\varepsilon X}{}
\step*{}{a_{21}:=\exists\mhyphen in(S,\lambda z:S.z\varepsilon X,x,w)\;:\;(\exists z:S.z\varepsilon X)}{}
\step*{}{a_{22}:=a_{20}a_{21}\;:\;\bot}{}
\conclude*{}{a_{23}:=\lambda w:\neg Px.a_{22}\;:\;\neg\neg Px}{}
\step*{}{a_{24}:=doub\mhyphen neg(Px)a_{23}\;:\;Px}{}
\conclude*{}{a_{25}:=\lambda x:S.a_{24}\;:\;(\forall x:S.Px)}{}
\end{flagderiv}

Here we used (twice) the Double Negation theorem with the proof term $doub\mhyphen neg$. This is the only place in this paper where we use the classical (not intuitionistic) logic.
\end{proof}

\section{Conclusion}

Starting with the definitions from \cite{Ned14} of binary relations and properties of reflexivity, symmetry, antisymmetry, and transitivity, we formalize in the theory $\lambda D$ (the Calculus of Constructions with Definitions) criteria for these properties and prove their invariance under operations of union, intersection, composition, and taking converse. We provide a formal definition of partition and formally prove correspondence  between equivalence relations and partitions. We derive a formal proof that $\subseteq$ is a partial order on power set. Finally we formally prove the principle of transfinite inductions for a type with well-ordering.

The results can be transferred to the proof assistants that are based on 
the Calculus of Constructions. Since binary relations are the abstract concepts used in many areas of mathematics, the results can be useful for further formalizations of mathematics in $\lambda D$. Our next direction of research is formalization of parts of probability theory in $\lambda D$ that we outlined in \cite{Kach18}.

\newpage
\appendix
\section*{APPENDIX}
\section{Proof of Theorem \ref{theorem_operations}}

\begin{proof}
1) 
\vspace{-0.2cm}
\begin{flagderiv}
\introduce*{}{S: *_s\;|\;R:br(S)}{}
\introduce*{}{x,y:S}{}
\assume*{}{u:(R^{-1})^{-1}xy}{}
\step*{}{u:R^{-1}yx}{}
\step*{}{u:Rxy}{}
\conclude*{}{a_1:=\lambda u:(R^{-1})^{-1}xy.u\;:\;(R^{-1})^{-1}xy\Rightarrow Rxy}{}

\assume*{}{u:Rxy}{}
\step*{}{u:R^{-1}yx}{}
\step*{}{u:(R^{-1})^{-1}xy}{}
\conclude*{}{a_2:=\lambda u:Rxy.u\;:\;Rxy\Rightarrow (R^{-1})^{-1}xy}{}

\conclude *{}{a_3:=\lambda x,y:S.a_1\;:\;(R^{-1})^{-1}\subseteq R}{}
\step*{}{a_4:=\lambda x,y:S.a_2\;:\;R\subseteq (R^{-1})^{-1}}{}
\step*{}{a_5:=rel\mhyphen equal \big(R^{-1})^{-1}, R,a_3,a_4\big)\;:\;(R^{-1})^{-1}=R}{}
\end{flagderiv}

2) \vspace{-0.2cm}
\begin{flagderiv}
\introduce*{}{S: *_s\;|\;R,Q:br(S)}{}
\step*{} {\text{Notation }A:=(R\circ Q)^{-1}\;:br(S)\;}{}
\step*{} {\text{Notation }B:=Q^{-1}\circ R^{-1}\;:br(S)\;}{}
\introduce*{}{x,y:S}{}
\step*{} {\text{Notation }P_1:=\lambda z:S.Ryz\wedge Qzx\;:\;S\rightarrow*_p}{}
\step*{} {\text{Notation }P_2:=\lambda z:S.Q^{-1}xz\wedge R^{-1}zy\;:\;S\rightarrow*_p}{}
\assume*{}{u:Axy}{}
\step*{}{u:(R\circ Q)yx}{}
\step*{}{u:(\exists z:S.P_1z)}{}
\assume*{}{z:S\;|\;v:P_1z}{}
\step*{}{v:Ryz\wedge Qzx}{}
\step*{}{a_1:=\wedge\text{-el}_1(Ryz,Qzx,v):Ryz}{}
\step*{}{a_2:=\wedge\text{-el}_2(Ryz,Qzx,v):Qzx}{}
\step*{}{a_1:R^{-1}zy}{}
\step*{}{a_2:Q^{-1}xz}{}
\step*{}{a_3:=\text{prod-term }(S,Q^{-1},R^{-1},x,z,y,a_2,a_1)\;:\;(Q^{-1}\circ R^{-1})xy}{}
\step*{}{a_3:Bxy}{}
\conclude*{}{a_4:=\lambda z:S.\lambda v:P_1z.a_3\;:\;(\forall z:S.(P_1z \Rightarrow Bxy))}{}
\step*{}{a_5:=\exists\text{-el }(S,P_1,u,Bxy,a_4):Bxy}{}
\conclude*[2]{}{a_6:=\lambda x,y:S. 
\lambda u:Axy.a_5\;:\;A\subseteq B}{}

\introduce*{}{x,y:S\;|\;u:Bxy}{}
\step*{}{u:(\exists z:S.P_2z)}{}
\assume*{}{z:S\;|\;v:P_2z}{}
\step*{}{v:Q^{-1}xz\wedge R^{-1}zy}{}
\step*{}{a_7:=\wedge\text{-el}_1(Q^{-1}xz,R^{-1}zy,v):Q^{-1}xz}{}
\step*{}{a_8:=\wedge\text{-el}_2(Q^{-1}xz,R^{-1}zy,v):R^{-1}zy}{}
\step*{}{a_7:Qzx}{}
\step*{}{a_8:Ryz}{}
\step*{}{a_9:=\text{prod-term }(S,R,Q,y,z,x,a_8,a_7)\;:\;(R\circ Q)yx}{}
\step*{}{a_{9}:(R\circ Q)^{-1}xy}{}
\step*{}{a_{9}:Axy}{}
\conclude*{}{a_{10}:=\lambda z:S.\lambda v:P_2z.a_{9}\;:\;(\forall z:S.(P_2z \Rightarrow Axy))}{}
\step*{}{a_{11}:=\exists\text{-el }(S,P_2,u,Axy,a_{10}):Axy}{}
\conclude*{}{a_{12}:=\lambda x,y:S. 
\lambda u:Bxy.a_{11}\;:\;B\subseteq A}{}
\step*{}{\boldsymbol{conv\mhyphen prod}(S,R,Q):=rel\mhyphen equal (A,B,a_6,a_{12})\;:\;(R\circ Q)^{-1}=Q^{-1}\circ R^{-1}}{}
\end{flagderiv}

3) \vspace{-0.2cm}

\begin{flagderiv}
\introduce*{}{S: *_s\;|\;R,Q:br(S)}{}
\step*{} {\text{Notation }A:=(R\cap Q)^{-1}\;:br(S)\;}{}
\step*{} {\text{Notation }B:=R^{-1}\cap Q^{-1}\;:br(S)\;}{}
\introduce*{}{x,y:S\;|\;u:Axy}{}
\step*{}{u:(R\cap Q)^{-1}xy}{}
\step*{}{u:(R\cap Q)yx}{}
\step*{} {u:Ryx\wedge Qyx}{}
\step*{}{a_1:=\wedge\text{-el}_1(Ryx,Qyx,v):Ryx}{}
\step*{}{a_2:=\wedge\text{-el}_2(Ryx,Qyx,v):Qyx}{}
\step*{}{a_1:R^{-1}xy}{}
\step*{}{a_2:Q^{-1}xy}{}
\step*{}{a_3:=\wedge\text{-in}(R^{-1}xy,Q^{-1}xy,a_1,a_2):Bxy}{}
\conclude*{}{a_4:=\lambda x,y:S.\lambda u:Axy.a_3\;:\;A\subseteq B}{}

\introduce*{}{x,y:S\;|\;u:Bxy}{}
\step*{} {u:R^{-1}xy\wedge Q^{-1}xy}{}
\step*{}{a_5:=\wedge\text{-el}_1(R^{-1}xy,Q^{-1}xy,v):R^{-1}xy}{}
\step*{}{a_6:=\wedge\text{-el}_2(R^{-1}xy,Q^{-1}xy,v):Q^{-1}xy}{}
\step*{}{a_5:Ryx}{}
\step*{}{a_6:Qyx}{}
\step*{}{a_7:=\wedge\text{-in}(Ryx,Qyx,a_5,a_6):(R\cap Q)yx}{}
\step*{}{a_7:(R\cap Q)^{-1}xy}{}
\step*{}{a_7:Axy}{}
\conclude*{}{a_8:=\lambda x,y:S.\lambda u:Bxy.a_7\;:\;B\subseteq A}{}
\step*{}{a_9:=rel\mhyphen equal (A,B,a_4,a_8):(R\cap Q)^{-1}= R^{-1}\cap Q^{-1}}{}
\end{flagderiv}

4)  \vspace{-0.2cm}
\begin{flagderiv}
\introduce*{}{S: *_s\;|\;R,Q:br(S)}{}
\step*{} {\text{Notation }A:=(R\cup Q)^{-1}\;:br(S)\;}{}
\step*{} {\text{Notation }B:=R^{-1}\cup Q^{-1}\;:br(S)\;}{}
\introduce*{}{x,y:S\;|\;u:Axy}{}
\step*{} {u:(R\cup Q)yx}{}
\step*{} {u:Ryx\vee Qyx}{}
\assume*{}{v:Ryx}{}
\step*{}{v:R^{-1}xy}{}
\step*{}{a_1:=\vee\text{-in}_1(R^{-1}xy,Q^{-1}xy,v):Bxy}{}
\conclude*{}{a_2:=\lambda v:Ryx.a_1\;:\;Ryx\Rightarrow Bxy}{}

\assume*{}{v:Qyx}{}
\step*{}{v:Q^{-1}xy}{}
\step*{}{a_3:=\vee\text{-in}_2(R^{-1}xy,Q^{-1}xy,v):Bxy}{}
\conclude*{}{a_4:=\lambda v:Qyx.a_3\;:\;Qyx\Rightarrow Bxy}{}

\step*{}{a_5:=\vee\text{-el }(Ryx,Qyx,Bxy,u,a_2,a_4)\;:\;Bxy}{}
\conclude*{}{a_6:=\lambda x,y:S.\lambda u:Axy.a_5\;:\;A\subseteq B}{}

\introduce*{}{x,y:S\;|\;u:Bxy}{}
\step*{} {u:R^{-1}xy\vee Q^{-1}xy}{}
\assume*{}{v:R^{-1}xy}{}
\step*{}{v:Ryx}{}
\step*{}{a_7:=\vee\text{-in}_1(Ryx,Qyx,v):Ryx\vee Qyx}{}
\step*{}{a_7:(R\cup Q)^{-1}xy}{}
\step*{}{a_7:Axy}{}
\conclude*{}{a_8:=\lambda v:R^{-1}xy.a_7\;:\;R^{-1}xy\Rightarrow Axy}{}

\assume*{}{v:Q^{-1}xy}{}
\step*{}{v:Qyx}{}
\step*{}{a_9:=\vee\text{-in}_2(Ryx,Qyx,v):Ryx\vee Qyx}{}
\step*{}{a_9:(R\cup Q)^{-1}xy}{}
\step*{}{a_9:Axy}{}
\conclude*{}{a_{10}:=\lambda v:Q^{-1}xy.a_9\;:\;Q^{-1}xy\Rightarrow Axy}{}

\step*{}{a_{11}:=\vee\text{-el }(R^{-1}xy,Q^{-1}xy,Axy,u,a_8,a_{10})\;:\;Axy}{}
\conclude*{}{a_{12}:=\lambda x,y:S.\lambda u:Bxy.a_{11}\;:\;B\subseteq A}{}
\step*{}{a_{13}:=rel\mhyphen equal (A,B,a_6,a_{12})\;:\;(R\cup Q)^{-1}= R^{-1}\cup Q^{-1}}{}
\end{flagderiv}

5) \vspace{-0.2cm}
\begin{flagderiv}
\introduce*{}{S: *_s\;|\;R,P,Q:br(S)}{}
\step*{} {\text{Notation }A:=R\circ(P\cup Q)\;:br(S)\;}{}
\step*{} {\text{Notation }B:=R\circ P\cup R\circ Q\;:br(S)\;}{}
\introduce*{}{x,y:S}{}
\step*{} {\text{Notation }P_0:=\lambda z:S.Rxz\wedge(P\cup Q)zy\;:\;S\rightarrow*_p}{}
\assume*{}{u:Axy}{}
\step*{}{u:(\exists z:S.P_0z)}{}
\assume*{}{z:S\;|\;v:P_0z}{}
\step*{}{v:Rxz\wedge (P\cup Q)zy}{}
\step*{}{a_1:=\wedge\mhyphen  el_1(Rxz,(P\cup Q)zy,v):Rxz}{}
\step*{}{a_2:=\wedge\mhyphen el_2(Rxz,(P\cup Q)zy,v):(P\cup Q)zy}{}
\step*{}{a_2:Pzy\vee Qzy}{}
\assume*{}{w:Pzy}{}
\step*{}{a_3:=\text{prod-term }(S,R,P,x,z,y,a_1,w)\;:\;(R\circ P)xy}{}
\step*{}{a_4:=\vee\mhyphen in_1((R\circ P)xy,(R\circ Q)xy,a_3):Bxy}{}
\conclude*{}{a_5:=\lambda w:Pzy.a_4\;:\;Pzy\Rightarrow Bxy}{}
\assume*{}{w:Qzy}{}
\step*{}{a_6:=\text{prod-term }(S,R,Q,x,z,y,a_1,w)\;:\;(R\circ Q)xy}{}
\step*{}{a_7:=\vee\text{-in }_2((R\circ P)xy,(R\circ Q)xy,a_6):Bxy}{}
\conclude*{}{a_8:=\lambda w:Qzy.a_7\;:\;Qzy\Rightarrow Bxy}{}
\step*{}{a_9:=\vee\text{-el }(Pzy,Qzy,Bxy,a_2,a_5,a_8):Bxy}{}
\conclude*{}{a_{10}:=\lambda z:S.\lambda v:P_0z.a_9\;:\;(\forall z:S.(P_0z \Rightarrow Bxy))}{}
\step*{}{a_{11}:=\exists\text{-el }(S,P_0,u,Bxy,a_{10}):Bxy}{}
\conclude*[2]{}{a_{12}:=\lambda x,y:S. 
\lambda u:Axy.a_{11}\;:\;A\subseteq B}{}

\introduce*{}{x,y:S}{}
\step*{} {\text{Notation }P_1:=\lambda z:S.Rxz\wedge Pzy\;:\;S\rightarrow*_p}{}
\step*{} {\text{Notation }P_2:=\lambda z:S.Rxz\wedge Qzy\;:\;S\rightarrow*_p}{}
\assume*{}{u:Bxy}{}
\step*{}{u:(R\circ P)xy\vee (R\circ Q)xy}{}
\assume*{}{v:(R\circ P)xy}{}
\step*{}{v:(\exists z:S.P_1z)}{}
\assume*{}{z:S\;|\;w:P_1z}{}
\step*{}{w:Rxz\wedge Pzy}{}
\step*{}{a_{13}:=\wedge\mhyphen\text{el}_1(Rxz,Pzy,w)\;:\;Rxz}{}
\step*{}{a_{14}:=\wedge\mhyphen\text{el}_2(Rxz,Pzy,w)\;:\;Pzy}{}
\step*{}{a_{15}:=\vee\mhyphen in_1(Pzy,Qzy,a_{14}):(P\cup Q)zy}{}
\step*{}{a_{16}:=\text{prod-term }(S,R,(P\cup Q),x,z,y,a_{13},a_{15})\;:\;Axy}{}
\conclude*{}{a_{17}:=\lambda z:S.\lambda
w:P_1z.a_{16}\;:\;(\forall z:S.(P_1z\Rightarrow Axy))}{}
\step*{}{a_{18}:=\exists\text{-el }(S,P_1,v,Axy,a_{17}):Axy}{}
\conclude*{}{a_{19}:=\lambda v:(R\circ P)xy.a_{18}\;:\;((R\circ P)xy\Rightarrow Axy)}{}

\assume*{}{v:(R\circ Q)xy}{}
\step*{}{v:(\exists z:S.P_2z)}{}
\assume*{}{z:S\;|\;w:P_2z}{}
\step*{}{a_{20}:=\wedge\text{-el}_1(Rxz,Qzy,w)\;:\;Rxz}{}
\step*{}{a_{21}:=\wedge\text{-el}_2(Rxz,Qzy,w)\;:\;Qzy}{}
\step*{}{a_{22}:=\vee\mhyphen in_2(Pzy,Qzy,a_{21}):(P\cup Q)zy}{}
\step*{}{a_{23}:=\text{prod-term }(S,R,(P\cup Q),x,z,y,a_{20},a_{22})\;:\;Axy}{}
\conclude*{}{a_{24}:=\lambda z:S.\lambda
w:P_2z.a_{23}\;:\;(\forall z:S.(P_2z\Rightarrow Axy))}{}
\step*{}{a_{25}:=\exists\text{-el }(S,P_2,v,Axy,a_{24}):Axy}{}
\conclude*{}{a_{26}:=\lambda v:(R\circ Q)xy.a_{25}\;:\;((R\circ Q)xy\Rightarrow Axy)}{}
\step*{}{a_{27}:=\vee\text{-el}((R\circ P)xy,(R\circ Q)xy, Axy,u,a_{19}, a_{26})\;:\;Axy}{}
\conclude*[2]{}{a_{28}:=\lambda x,y:S.\lambda
u:Bxy.a_{27}\;:\;B\subseteq A}{}
\step*{}{a_{29}:=rel\mhyphen equal (A,B,a_{12},a_{28})\;:\;R\circ(P\cup Q)=R\circ P\cup R\circ Q}{}
\end{flagderiv}

6) is proven similarly to 5).

\newpage
7) \vspace{-0.2cm}
\begin{flagderiv}
\introduce*{}{S: *_s\;|\;R,P,Q:br(S)}{}
\step*{} {\text{Notation }A:=R\circ(P\cap Q)\;:br(S)\;}{}
\step*{} {\text{Notation }B:=R\circ P\cap R\circ Q\;:br(S)\;}{}
\introduce*{}{x,y:S}{}
\step*{} {\text{Notation }P:=\lambda z:S.Rxz\wedge(P\cap Q)zy\;:\;*_p}{}
\assume*{}{u:Axy}{}
\step*{} {u:(\exists z:S.Pz)}{}
\introduce*{}{z:S\;|\;v:Pz}{}
\step*{}{v:Rxz\wedge (P\cap Q)zy}{}
\step*{}{a_1:=\wedge\text{-el}_1(Rxz,(P\cap Q)zy,v):Rxz}{}
\step*{}{a_2:=\wedge\text{-el}_2(Rxz,(P\cap Q)zy,v):(P\cap Q)zy}{}
\step*{}{a_2:Pzy\wedge Qzy}{}
\step*{}{a_3:=\wedge\text{-el}_1(Pzy,Qzy,a_2):Pzy}{}
\step*{}{a_4:=\wedge\text{-el}_2(Pzy,Qzy,a_2):Qzy}{}
\step*{}{a_5:=\text{prod-term }(S,R,P,x,z,y,a_1,a_3)\;:\;(R\circ P)xy}{}
\step*{}{a_6:=\text{prod-term }(S,R,Q,x,z,y,a_1,a_4)\;:\;(R\circ Q)xy}{}
\step*{}{a_7:=\wedge\text{-in }((R\circ P)xy,(R\circ Q)xy,a_5,a_6)\;:\;Bxy}{}
\conclude*{}{a_8:=\lambda z:S.\lambda
v:Pz.a_7\;:\;(\forall z:S.(Pz\Rightarrow Bxy))}{}
\step*{}{a_9:=\exists\text{-el }(S,P,u,Bxy,a_8):Bxy}{}
\conclude*[2]{}{a_{10}:=\lambda x,y:S.\lambda u:Axy.a_9\;:\;R\circ(P\cap Q)\subseteq R\circ P\cap R\circ Q}{}
\end{flagderiv}

8) is proven similarly to 7).

9) \vspace{-0.2cm}
\begin{flagderiv}
\introduce*{}{S: *_s\;|\;R,P,Q:br(S)}{}
\step*{} {\text{Notation }A:=(R\circ P)\circ Q\;:br(S)\;}{}
\step*{} {\text{Notation }B:=R\circ (P\circ Q)\;:br(S)\;}{}
\introduce*{}{x,y:S}{}
\step*{} {\text{Notation }P_1(x,y):=\lambda z:S.(R\circ P)xz\wedge Qzy\;:\;S\rightarrow*_p}{}
\step*{} {\text{Notation }P_2(x,y):=\lambda z:S.Rxz\wedge (P\circ Q)zy\;:\;S\rightarrow*_p}{}
\step*{} {\text{Notation }P_3(x,y):=\lambda z:S.Rxz\wedge Pzy\;:\;S\rightarrow*_p}{}
\step*{} {\text{Notation }P_4(x,y):=\lambda z:S.Pxz\wedge Qzy\;:\;S\rightarrow*_p}{}
\done
\introduce*{}{x,y:S\;|\;u:Axy}{}
\step*{} {u:(\exists z:S.P_1(x,y)z)}{}
\introduce*{}{z:S\;|\;v:P_1(x,y)z}{}
\step*{}{a_1:=\wedge\text{-el}_1((R\circ P)xz,Qzy,v):(R\circ P)xz}{}
\step*{}{a_2:=\wedge\text{-el}_2((R\circ P)xz,Qzy,v):Qzy}{}
\step*{}{a_1:(\exists z_1:S.P_3(x,z)z_1)}{}

\introduce*{}{z_1:S\;|\;w:P_3(x,z)z_1}{}
\step*{}{w:Rxz_1\wedge Pz_1z}{}
\step*{}{a_3:=\wedge\text{-el}_1(Rxz_1,Pz_1z,w):Rxz_1}{}
\step*{}{a_4:=\wedge\text{-el}_2(Rxz_1,Pz_1z,w):Pz_1z}{}
\step*{}{a_5:=\text{prod-term }(S,P,Q,z_1,z,y,a_4,a_2)\;:\;(P\circ Q)z_1y}{}
\step*{}{a_6:=\text{prod-term }(S,R,(P\circ Q),x,z_1,y,a_3,a_5)\;:\;Bxy}{}
\conclude*{}{a_7:=\lambda z_1:S.\lambda
w:P_3(x,z)z_1.a_6\;:\;(\forall z_1:S.(P_3(x,z)z_1\Rightarrow Bxy))}{}
\step*{}{a_8:=\exists\text{-el }(S,P_3(x,z),a_1,Bxy,a_7):Bxy}{}
\conclude*{}{a_9:=\lambda z:S.\lambda
v:P_1(x,y)z.a_8\;:\;(\forall z:S.(P_1(x,y)z\Rightarrow Bxy))}{}
\step*{}{a_{10}:=\exists\text{-el }(S,P_1(x,y),u,Bxy,a_9):Bxy}{}
\conclude*{}{a_{11}:=\lambda x,y:S.\lambda u:Axy.a_{10}\;:\;A\subseteq B}{}

\introduce*{}{x,y:S\;|\;u:Bxy}{}
\step*{} {u:(\exists z:S.P_2(x,y)z)}{}
\introduce*{}{z:S\;|\;v:P_2(x,y)z}{}
\step*{}{a_{12}:=\wedge\text{-el}_1(Rxz,(P\circ Q)zy,v):Rxz}{}
\step*{}{a_{13}:=\wedge\text{-el}_2(Rxz,(P\circ Q)zy,v):(P\circ Q)zy}{}
\step*{}{a_{13}:(\exists z_1:S.P_4(z,y)z_1)}{}

\introduce*{}{z_1:S\;|\;w:P_4(z,y)z_1}{}
\step*{}{w:Pzz_1\wedge Qz_1y}{}
\step*{}{a_{14}:=\wedge\text{-el}_1(Pzz_1,Qz_1y,w):Pzz_1}{}
\step*{}{a_{15}:=\wedge\text{-el}_2(Pzz_1,Qz_1y,w):Qz_1y}{}
\step*{}{a_{16}:=\text{prod-term }(S,R,P,x,z,z_1,a_{12},a_{14})\;:\;(R\circ P)xz_1}{}
\step*{}{a_{17}:=\text{prod-term }(S,R\circ P,Q,x,z_1,y,a_{16},a_{15})\;:\;Axy}{}
\conclude*{}{a_{18}:=\lambda z_1:S.\lambda
w:P_4(z,y)z_1.a_{17}\;:\;(\forall z_1:S.(P_4(z,y)z_1\Rightarrow Axy))}{}
\step*{}{a_{19}:=\exists\text{-el }(S,P_4(z,y),a_{13},Axy,a_{18}):Axy}{}
\conclude*{}{a_{20}:=\lambda z:S.\lambda
v:P_2(x,y)z.a_{19}\;:\;(\forall z:S.(P_2(x,y)z\Rightarrow Axy))}{}
\step*{}{a_{21}:=\exists\text{-el }(S,P_2(x,y),u,Axy,a_{20}):Axy}{}
\conclude*{}{a_{22}:=\lambda x,y:S.\lambda u:Bxy.a_{21}\;:\;B\subseteq A}{}
\step*{}{a_{23}:=rel\mhyphen equal(A,B,a_{11},a_{22})\;:\;(R\circ P)\circ Q=R\circ(P\circ Q)}{}
\end{flagderiv}
\end{proof}

\newpage
\section{Proof of Theorem \ref{theorem:criteria}}

\begin{proof}
Each statement here is a bi-implication, so we use the proof term \textit{bi-impl} from Lemma \ref{lemma_bi-impl}.

1) \vspace{-0.2cm}

\begin{flagderiv}
\introduce*{}{S:*_s\,|\,R:br(S)}{}
\step*{} {\text{Notation }A:=refl(S,R):*_p}{}
\step*{} {\text{Notation }B:=id_s\subseteq R:*_p}{}
\assume*{}{u:A}{}
\introduce*{}{x,y:S\;|\;v:(id_S)xy}{}
\step*{}{v:x=_Sy}{}
\step*{} {\text{Notation } P:=\lambda z:S.Rxz\;:\;S\rightarrow*_p}{}
\step*{}{ux:Px}{}
\step*{}{a_1:=eq$-$subs(S,P,x,y,v,ux)\;:\;Py}{}
\step*{}{a_1:Rxy}{}
\conclude*{}{a_2:=\lambda x,y:S.\lambda v:(id_S)xy.a_1\;:\;(id_S\subseteq R)}{}
\step*{}{a_2:B}{}
\conclude*{}{a_3:=\lambda u:A.a_2\;:\;(A\Rightarrow B)}{}

\assume*{}{u:B}{}
\introduce*{}{x:S}{}
\step*{}{a_4:=eq$-$refl(S,x)\;:\;x=_Sx}{}
\step*{}{a_4:(id_S)xx}{}
\step*{}{uxx:(id_S)xx\Rightarrow Rxx}{}
\step*{}{a_5:=uxxa_4\;:\;Rxx}{}
\conclude*{}{a_6:=\lambda x:S.a_5\;:\;(\forall x:S.Rxx)}{}
\step*{}{a_6:A}{}
\conclude*{}{a_7:=\lambda u:B.a_6\;:\;(B\Rightarrow A)}{}
\step*{}{a_8:=bi\mhyphen impl (A,B,a_3,a_7)\;:\;refl(S,R)\Leftrightarrow id_s\subseteq R}{}
\end{flagderiv}

2) and 3) are proven together as follows.

 \vspace{-0.2cm}
\begin{flagderiv}
\introduce*{}{S:*_s\,|\,R:br(S)}{}
\step*{} {\text{Notation }A:=sym(S,R):*_p}{}
\step*{} {\text{Notation }B:=R^{-1}\subseteq R:*_p}{}
\step*{} {\text{Notation }C:=R^{-1}= R:*_p}{}
\assume*{}{u:A}{}
\introduce*{}{x,y:S\;|\;v:R^{-1}xy}{}
\step*{}{v:Ryx}{}
\step*{} {uyx:(Ryx\Rightarrow Rxy)}{}
\step*{}{a_1:=uyxv\;:\;Rxy}{}
\conclude*{}{a_2:=\lambda x,y:S.\lambda u:R^{-1}xy.a_1\;:\;(R^{-1}\subseteq R)}{}

\introduce*{}{x,y:S\;|\;v:Rxy}{}
\step*{} {uxy:(Rxy\Rightarrow Ryx)}{}
\step*{}{a_3:=uxyv\;:\;Ryx}{}
\step*{}{a_3:R^{-1}xy}{}
\conclude*{}{a_4:=\lambda x,y:S.\lambda u:Rxy.a_3\;:\;(R\subseteq R^{-1})}{}
\step*{}{a_5:=rel\mhyphen equal(S,R^{-1},R,a_2,a_4)\;:\;R^{-1}=R}{}
\conclude*{}{a_6:=\lambda u:A.a_2\;:\;A\Rightarrow B}{}
\step*{}{a_7:=\lambda u:A.a_5\;:\;A\Rightarrow C}{}

\assume*{}{u:B}{}
\introduce*{}{x,y:S\;|\;v:Rxy}{}
\step*{}{v:R^{-1}yx}{}
\step*{}{uyx:(R^{-1}yx\Rightarrow Ryx)}{}
\step*{}{a_8:=uyxv\;:\;Ryx}{}
\conclude*{}{a_9:=\lambda x,y:S.\lambda v:Rxy.a_8\;:\;sym(S,R)}{}
\conclude*{}{a_{10}:=\lambda u:B.a_8\;:\;(B\Rightarrow A)}{}

\assume*{}{u:C}{}
\step*{}{u:R^{-1}\subseteq R\wedge R\subseteq R^{-1}}{}
\step*{}{a_{11}:=\wedge\text{-el}_1(R^{-1}\subseteq R,R\subseteq R^{-1},u):R^{-1}\subseteq R}{}
\step*{}{a_{11}:B}{}
\step*{}{a_{12}:=a_{10}a_{11}\;:\;A}{}
\conclude*{}{a_{13}:=\lambda u:C.a_{12}\;:\;(C\Rightarrow A)}{}
\step*{}{a_{14}:=bi\mhyphen impl (A,B,a_6,a_{10})\;:\;sym(S,R)\Leftrightarrow R^{-1}\subseteq R}{}
\step*{}{\boldsymbol{sym\mhyphen criterion}(S,R):=bi\mhyphen impl (A,C,a_7,a_{13})\;:\;sym(S,R)\Leftrightarrow R^{-1}=R}{}
\end{flagderiv}

4) \vspace{-0.2cm}
\begin{flagderiv}
\introduce*{}{S:*_s\,|\,R:br(S)}{}
\step*{} {\text{Notation }A:=antisym(S,R):*_p}{}
\step*{} {\text{Notation }B:=R\cap R^{-1}\subseteq id_S:*_p}{}

\assume*{}{u:A}{}
\introduce*{}{x,y:S\;|\;v:(R\cap R^{-1})xy}{}
\step*{}{v:R^{-1}xy\wedge Rxy}{}
\step*{}{a_1:=\wedge\text{-el}_1(R^{-1}xy,Rxy,v):R^{-1}xy}{}
\step*{}{a_2:=\wedge\text{-el}_2(R^{-1}xy,Rxy,v):Rxy}{}
\step*{}{a_1:Ryx}{}
\step*{}{uxy:Rxy\Rightarrow Ryx\Rightarrow x=y}{}
\step*{}{a_3:=uxya_2a_1\;:\;(x=y)}{}
\step*{}{a_3:(id_S)xy}{}
\conclude*{}{a_4:=\lambda x,y:S.\lambda v:(R\cap R^{-1})xy.a_3\;:\;(R\cap R^{-1}\subseteq id_S)}{}
\step*{}{a_4:B}{}
\conclude*{}{a_5:=\lambda u:A.a_4\;:\;(A\Rightarrow B)}{}

\assume*{}{u:B}{}
\introduce*{}{x,y:S\;|\;v:Rxy\;|\;w:Ryx}{}
\step*{}{w:R^{-1}xy}{}
\step*{}{a_6:=\wedge\text{-in}_1(R^{-1}xy,Rxy,w,v):(R^{-1}\cap R)xy}{}
\step*{}{a_7:=uxya_6\;:\; (id_S)xy}{}
\step*{}{a_7:x=y}{}
\conclude*{}{a_8:=\lambda x,y:S.\lambda v:Rxy.\lambda w:Ryx. a_7\;:\;antisym(S,R)}{}
\step*{}{a_8:A}{}
\conclude*{}{a_9:=\lambda u:B.a_8\;:\;(B\Rightarrow A)}{}
\step*{}{a_{10}:=bi\mhyphen impl(A,B,a_5,a_9):(antisym(S,R)\Leftrightarrow (R\cap R^{-1}\subseteq id_S))}{}
\end{flagderiv} 
 
5) \vspace{-0.2cm}
\begin{flagderiv}
\introduce*{}{S:*_s\,|\,R:br(S)}{}
\step*{} {\text{Notation }A:=trans(S,R):*_p}{}
\step*{} {\text{Notation }B:=R\circ R\subseteq R:*_p}{}
\assume*{}{u:A}{}
\introduce*{}{x,y:S}{}
\step*{} {\text{Notation }P:=\lambda z:S.Rxz\wedge Rzy\;:\;S\rightarrow *_p}{}
\assume*{}{v:(R\circ R)xy}{}
\step*{}{v:(\exists z:S.Pz)}{}
\introduce*{}{z:S\;|\;w:Pz}{}
\step*{}{w:Rxz\wedge Rzy}{}
\step*{}{a_1:=\wedge\text{-el}_1(Rxz,Rzy,w):Rxz}{}
\step*{}{a_2:=\wedge\text{-el}_2(Rxz,Rzy,w):Rzy}{}
\step*{}{a_3:=uxzya_1a_2\;:\;Rxy}{}
\conclude*{}{a_4:=\lambda z:S.\lambda w:Pz.a_3\;:\;(\forall z:S.(Pz \Rightarrow Rxy))}{}
\step*{}{a_5:=\exists{-el }(S,P,v,Rxy,a_4)\;:\;Rxy}{}
\conclude*[2]{}{a_6:=\lambda x,y:S.\lambda v:(R\circ R)xy.a_5\;:\;(R\circ R\subseteq R)}{}
\step*{}{a_6:B}{}
\conclude*{}{a_7:=\lambda u:A.a_6\;:\;(A\Rightarrow B)}{}

\assume*{}{u:B}{}
\introduce*{}{x,y,z:S\;|\;v:Rxy\;|\;w:Ryz}{}
\step*{}{a_8:=prod\mhyphen term(S,R,R,x,y,z,v,w)\;:\;(R\circ R)xz}{}
\step*{}{a_9:=uxz\;:\;((R\circ R)xz\Rightarrow Rxz)}{}
\step*{}{a_{10}:=a_9a_8\;:\;Rxz}{}
\conclude*{}{a_{11}:=\lambda x,y,z:S.\lambda v:Rxy.\lambda w:Ryz.a_{10}\;:\;trans(S,R)}{}
\step*{}{a_{11}:A}{}
\conclude*{}{a_{12}:=\lambda u:B.a_{11}\;:\;(B\Rightarrow A)}{}
\step*{}{a_{13}:=bi\mhyphen impl(A,B,a_7,a_{12}):(trans(S,R)\Leftrightarrow (R\circ R\subseteq R))}{}
\end{flagderiv} 
\end{proof}

\bibliographystyle{plain}
\bibliography{farida}

\end{document}